\documentclass{amsart}
\usepackage{amsmath}
\usepackage{amssymb}
\usepackage{amsthm}
\usepackage{mathrsfs}

%     If your article includes graphics, uncomment this command.
\usepackage{graphicx}

\newtheorem{theorem}{Theorem}[section]
\newtheorem{lemma}[theorem]{Lemma}

\theoremstyle{definition}
\newtheorem{definition}[theorem]{Definition}

\newtheorem{corollary}[theorem]{Corollary}

\theoremstyle{remark}
\newtheorem{remark}[theorem]{Remark}

\numberwithin{equation}{section}

%    Absolute value notation

%    Blank box placeholder for figures (to avoid requiring any
%    particular graphics capabilities for printing this document).

\newcommand{\RR} {\mathbb R}

\newcommand{\BBB} {\mathcal B}

\newcommand{\MMM} {\mathcal M}

\newcommand{\ra} {\rightarrow}

\begin{document}

\title{On structure space of the ring $B_1(X)$}

%    Information for first author
\author{A. Deb Ray}
\address{Department of Pure Mathematics, University of Calcutta, 35, Ballygunge Circular Road, Kolkata - 700019, INDIA} 
\email{debrayatasi@gmail.com}
%    Information for second author
\author{Atanu Mondal}
\address{Department of Commerce (E), St. Xavier's college, 30, Mother Teresa sarani, Kolkata - 700016, INDIA}
\email{atanu@sxccal.edu}

\begin{abstract}
In this article, we continue our study of the ring of Baire one functions on a topological space $(X,\tau)$, denoted by $B_1(X)$ and extend the well known M. H. Stones's theorem from $C(X)$ to $B_1(X)$. Introducing the structure space of $B_1(X)$, an analogue of Gelfand Kolmogoroff theorem is established. It is observed that $(X,\tau)$ may not be embedded inside the structure space of $B_1(X)$. This observation inspired us to introduce a weaker form of embedding and show that in case $X$ is a $T_4$ space, $X$ is weakly embedded as a dense subspace, in the structure space of $B_1(X)$. It is further established that the ring $B_1^{*}(X)$ of all bounded Baire one functions is a C-type ring and also, the structure space of $B_1^{*}(X)$ is homeomorphic to the structure space of $B_1(X)$. Introducing a finer topology $\sigma$ than the original $T_4$ topology $\tau$ on $X$, it is proved that $B_1(X)$ contains free (maximal) ideals if $\sigma$ is strictly finer than $\tau$. It is also proved that $\tau = \sigma$ if and only if $B_1(X) = C(X)$. Moreover, in the class of all perfectly normal $T_1$ spaces, $B_1(X) = C(X)$ is equivalent to the discreteness of the space $X$.
\end{abstract}
\keywords{$Z_B$- filter, $Z_B$-ultrafilter, free and fixed maximal ideals of $B_1(X)$, Structure space of a ring, Compactification}
\subjclass[2010]{26A21, 54C30, 13A15, 54C50, 54D35}

\maketitle

\section{Introduction and Prerequisites}
\noindent The collection $B_1(X)$, of all real valued Baire one functions defined on a topological space $X$ forms a commutative lattice ordered ring with unity. Initiating the study of $B_1(X)$ in \cite{AA} we have established a duality between the ideals of $B_1(X)$ and $Z_B$-filters (an analogue of $Z$-filters) on $X$ in a subsequent paper \cite{AA2}. \\
\noindent In case of the rings of continuous functions, M. H. Stone's theorem states that, for every topological space $X$ there exists a Tychonoff space $Y$ such that $C(X) \cong C(Y)$, which is extremely important and useful. Since $B_1(X)$ is a ring that contains $C(X)$ as a subring, it is natural to ask whether it is possible to extend the celebrated M. H. Stone's theorem \cite{GJ} in this bigger ring. In this paper, we begin our study of $B_1(X)$ by addressing this question and answer it in affirmative. Therefore, in view of this result, it would be enough to deal with Tychonoff spaces as long as the study of the ring structure of $B_1(X)$ is concerned.  \\\\  
\noindent The collection of all maximal ideals of $C(X)$, denoted by $\MMM(C(X))$, equipped with hull-kernel topology is known as the structure space of the ring $C(X)$. It is also very well known that the structure space of $C(X)$ is homeomorphic to the collection of all $Z$-ultrafilters on $X$ with Stone topology \cite{GJ}. In section 2, defining the structure space of $B_1(X)$ in a similar manner, we could establish an analogue of this result in the context of the ring $B_1(X)$. The importance of the structure space of $C(X)$ for a Tychonoff space $X$ lies in the fact that a copy of $X$ is densely embedded in it, i.e., a Tychonoff space $X$ is embedded in the space $\MMM(C(X))$ with hull-kernel topology. Moreover, the structure space $\MMM(C(X))$ becomes the $Stone$-$\check{C}ech$ compactification of $X$. But in case of $\MMM(B_1(X))$, it may not happen the same way. The space $(X, \tau)$ may not be embedded in the structure space $\MMM(B_1(X))$ of the ring $B_1(X)$. We have shown that in case $X$ is a $T_4$-space, a weaker form of embedding from $X$ into the structure space $\MMM(B_1(X))$ exists. Being inspired by this fact, we have introduced another topology $\sigma$ on $X$, generally finer than $\tau$, such that $(X, \sigma)$ is densely embedded inside $\MMM(B_1(X))$. This result leads to several important conclusions. It is proved that the ring $B_1^*(X)$ of bounded Baire one functions is a C-type ring and $\MMM(B_1(X)) \cong \MMM(B_1^*(X))$. Finally, for compact $T_2$ spaces, $\tau \subsetneq \sigma$ ensures the existence of free maximal ideals in $B_1(X)$ and within the class of perfectly normal $T_1$ spaces at least, $B_1(X) = C(X)$ is equivalent to the space to be discrete. \\\\
\noindent In what follows, we write $X$, $Y$ etc. to denote topological spaces without mentioning their topologies (unless required) explicitly.  
\section{Extension of M. H. Stone's Theorem}  
\noindent As proposed in the introduction, we construct an isomorphism $B_1(Y) \rightarrow B_1(X)$ using the existing isomorphism $C(Y) \rightarrow C(X)$, where $Y$ is the Tychonoff space constructed suitably from a given topological space $X$. It is also not very hard to observe that such an isomorphism is a lattice isomorphism. 
\begin{theorem}\label{MHS}
For each topological space $X$, there exists a Tychonoff space $Y$ such that $B_1(X)$ is isomorphic to $B_1(Y)$ and $B_1^*(X)$ is isomorphic to $B_1^*(Y)$ under the same (restriction) map. 
\end{theorem}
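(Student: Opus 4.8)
\emph{Proof proposal.} The plan is to take for $Y$ exactly the Tychonoff space furnished by M. H. Stone's theorem for $C(X)$, together with its canonical continuous surjection $q : X \ra Y$, and then to check that the very same pullback map $q^{*} : h \mapsto h \circ q$ that realises the isomorphism $C(Y) \cong C(X)$ also carries $B_1(Y)$ isomorphically onto $B_1(X)$ and $B_1^{*}(Y)$ onto $B_1^{*}(X)$. Recall that $Y$ may be taken as the set of classes of the relation $x \sim x'$ iff $c(x) = c(x')$ for every $c \in C(X)$, suitably topologised so as to be Tychonoff, with $q$ the induced map; by Stone's theorem $q^{*}$ sends $C(Y)$ isomorphically onto $C(X)$ (and $C^{*}(Y)$ onto $C^{*}(X)$).

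First I would verify that $q^{*}$ maps $B_1(Y)$ into $B_1(X)$. If $g \in B_1(Y)$, write $g = \lim_n g_n$ pointwise with $g_n \in C(Y)$; then $g \circ q = \lim_n (g_n \circ q)$ pointwise, and each $g_n \circ q \in C(X)$, so $g \circ q \in B_1(X)$. That $q^{*}$ is a ring homomorphism, and in fact a lattice homomorphism, is immediate from the pointwise definitions, since $(g_1 \vee g_2)\circ q = (g_1\circ q)\vee(g_2\circ q)$ and likewise for $\wedge$, $+$ and $\cdot$. Injectivity is equally quick: surjectivity of $q$ forces $g \circ q = 0$ to give $g = 0$.

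The heart of the argument is surjectivity of $q^{*}$ on $B_1$, which I would split into two observations. The first is that every $f \in B_1(X)$ is automatically constant on the fibres of $q$: writing $f = \lim_n f_n$ pointwise with $f_n \in C(X)$, and noting that each $f_n$ is constant on $\sim$-classes by the very definition of $\sim$, we obtain $f(x) = \lim_n f_n(x) = \lim_n f_n(x') = f(x')$ whenever $x \sim x'$. Hence $f$ descends to a well-defined function $g$ on $Y$ with $g \circ q = f$. The second observation promotes $g$ to a Baire one function on $Y$: by Stone's theorem each $f_n$ equals $g_n \circ q$ for a unique $g_n \in C(Y)$, and for any $y \in Y$, choosing $x$ with $q(x) = y$ gives $g_n(y) = f_n(x) \to f(x) = g(y)$, so $g_n \to g$ pointwise on $Y$ and $g \in B_1(Y)$. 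Thus $q^{*}g = f$.

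Finally I would observe that the bounded case needs no separate work: since $q$ is onto, $g$ and $g \circ q$ share the same range, so $q^{*}$ carries $B_1^{*}(Y)$ onto $B_1^{*}(X)$ under precisely the same map, which completes the proof. The only step demanding genuine care -- the main obstacle -- is surjectivity on $B_1$, and specifically the passage from the pointwise convergence $f_n \to f$ on $X$ to the pointwise convergence $g_n \to g$ on $Y$; everything else is a transcription of the classical $C(X)$ argument. I expect no difficulty beyond keeping track of the fact that the choice of fibre representatives is irrelevant, precisely because every function in sight is constant on $\sim$-classes.
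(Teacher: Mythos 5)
Your proposal is correct and follows essentially the same route as the paper: the same quotient space $Y = X/\sim$ with the weak topology from $C(X)$, and the same pullback map $h \mapsto h\circ q$ (the paper's $\widehat{\psi}$). The only difference is that you explicitly carry out the injectivity and surjectivity checks (constancy of Baire one functions on fibres, descent, and pointwise convergence of the descended sequence) that the paper dismisses as ``easy to check.''
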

\begin{proof}
Define a binary relation $``\sim"$ on $X$ by $x \sim y$ if and only if $f(x)=f(y)$, for all $f \in C(X)$. $\sim$ is an equivalence relation on $X$. Let $Y= X /\sim$ $\equiv \{[x]: x \in X\}$, where $[x]$ denotes the equivalence class of $x\in X$.\\
Define $\tau :X \rightarrow Y$ by $\tau(x)=[x]$, for all $x \in X$.\\
\noindent For each $f \in C(X)$, let $g_f \in C(Y)$ be defined by the rule $g_f([x])=f(x)$, for all $[x] \in Y$. Certainly $g_f$ is well defined and $g_f \circ \tau = f$. \\
\noindent Consider $C'=\{g_f: f \in C(X)\}$ and equip $Y$ with the weak topology induced by the family $C'$. Then $Y$ becomes a completely regular space \cite{GJ}. Also, $g_f \circ \tau $ is continuous for all $g_f \in C'$. Hence $\tau$ is continuous. \\
\noindent If $g \in C(Y)$ then $g\circ \tau \in C(X)$ and hence $g \circ \tau = f$, for some $f \in C(X)$. So, $\forall$ $[x] \in Y$, $g([x])=g(\tau(x))=f(x)=(g_f\circ \tau )(x)=g_f([x])$. i.e., $g = g_f$ and consequently, $C' = C(Y)$.\\ 
\noindent Let $[x] \neq [y]$ in $Y$. Then there exists $f \in C(X)$ such that $f(x) \neq f(y)$. So, $g_f[x] \neq g_f[y]$. This proves that $Y$ is Hausdorff and hence, a Tychonoff space.\\
\noindent Let $h \in B_1(Y)$ be any Baire one function on $Y$. There exists a sequence of continuous functions $\{h_n\} \subset C(Y)$ such that, $\{h_n\}$ converges pointwise to $h$, i.e., $\lim\limits_{n\to\infty}h_n(x)=h(x)$, for all $x \in X$. Clearly, $h_n \circ \tau \in C(X)$, $\forall n \in \mathbb{N}$ and also $\lim\limits_{n\to\infty}(h_n\circ \tau)(x)$ exists for all $x \in X$.\\
\noindent Define $\widehat{\psi}(h): X \rightarrow \mathbb{R}$ by $\widehat{\psi}(h)(x)=\lim\limits_{n\to\infty}(h_n\circ \tau)(x)$, for each $h \in B_1(Y)$. Then $\widehat{\psi}(h) \in B_1(X)$. Finally, define $\widehat{\psi}: B_1(Y) \rightarrow B_1(X)$ by $h \mapsto \widehat{\psi}(h)$. It is easy to check that $\widehat{\psi}$ is an isomorphism and in view of a result proved in \cite{AA}, the restriction of $\widehat{\psi}$ on $B_1^*(Y)$ to $B_1^*(X)$ is also an isomorphism.
\end{proof}
\noindent In \cite{AA}, we have established that every ring homomorphism $B_1(Y) \ra B_1(X)$ is a lattice homomorphism. As a consequence, we get
\begin{corollary}
The isomorphism $\psi : B_1(Y) \ra B_1(X)$ is a lattice isomorphism.
\end{corollary}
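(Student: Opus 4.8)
The plan is to obtain the corollary as an immediate consequence of Theorem~\ref{MHS} together with the quoted result of \cite{AA}. First I would recall that the map $\psi = \widehat{\psi}$ produced in Theorem~\ref{MHS} is a ring isomorphism; in particular it is a bijective ring homomorphism of $B_1(Y)$ onto $B_1(X)$. Since, by \cite{AA}, every ring homomorphism $B_1(Y) \ra B_1(X)$ is automatically a lattice homomorphism, the map $\psi$ preserves the lattice operations, that is, $\psi(h_1 \vee h_2) = \psi(h_1) \vee \psi(h_2)$ and $\psi(h_1 \wedge h_2) = \psi(h_1) \wedge \psi(h_2)$ for all $h_1, h_2 \in B_1(Y)$.

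It then remains only to check that the inverse map also respects $\vee$ and $\wedge$. For this I would note that $\psi^{-1} : B_1(X) \ra B_1(Y)$ is again a ring isomorphism, hence a ring homomorphism, and then invoke the same theorem of \cite{AA} with the roles of $X$ and $Y$ interchanged. This interchange is legitimate because the cited result applies to arbitrary topological spaces, and here $Y$ is simply the Tychonoff quotient constructed in Theorem~\ref{MHS}. Thus $\psi^{-1}$ is a lattice homomorphism as well, and therefore $\psi$ is a bijection preserving joins and meets in both directions, which is exactly the assertion that $\psi$ is a lattice isomorphism.

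Alternatively, one can bypass the second application of the theorem by appealing to the purely lattice-theoretic fact that a bijective lattice homomorphism is automatically a lattice isomorphism: an injective lattice homomorphism reflects the order, since $\psi(h_1) \wedge \psi(h_2) = \psi(h_1)$ forces $h_1 \wedge h_2 = h_1$, i.e. $h_1 \le h_2$, whence $\psi^{-1}$ is order preserving and so respects both lattice operations. I do not expect any genuine obstacle in this argument; essentially all of the content is carried by the theorem of \cite{AA}, and the only point requiring care is the observation that bijectivity of $\psi$ upgrades the lattice-homomorphism property to a lattice-isomorphism, which is handled either by applying the quoted result to $\psi^{-1}$ or by the elementary reflection-of-order argument above.
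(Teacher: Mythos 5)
Your argument is correct and follows the same route as the paper, which simply cites the result from \cite{AA} that every ring homomorphism $B_1(Y)\ra B_1(X)$ is a lattice homomorphism. The only difference is that you explicitly verify that the inverse map is also a lattice homomorphism, a point the paper leaves implicit; your two ways of handling it (applying the cited result to $\psi^{-1}$, or the order-reflection argument) are both sound.
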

\noindent Theorem~\ref{MHS} ensures that it is enough to study the ring of Baire one functions defined on any Tychonoff space, instead of any arbitrary topological space. Therefore, in the rest of this paper, by a topological space we always mean a Tychonoff space, unless stated otherwise.
\section{The structure space of $B_1(X)$}
\noindent Let $X$ be a Tychonoff space. Consider $\MMM(B_1(X))$ as the collection of all maximal ideals of the ring $B_1(X)$. It is easy to observe that for each $f \in B_1(X)$, if $\widehat{\mathscr M_f} = \{\widehat{M} \in \MMM(B_1(X)) \ : \ f \in \widehat{M}\}$ then the collection $\{\widehat{\mathscr M_f} \ : \ f \in B_1(X)\}$ forms a \textbf{base for closed sets} for some topology $\zeta$ on $\MMM(B_1(X))$. This topological space $(\MMM(B_1(X)), \zeta)$ is called the \textbf{structure space} of $B_1(X)$ and the topology is known as the \textbf{hull-kernel topology}. It is well known that the structure space of any commutative ring with unity is always compact. Moreover, the structure space is Hausdorff if the ring is Gelfand (i.e., a ring where every prime ideal can be extended to a unique maximal ideal). Therefore, 
\begin{itemize}
    \item $\MMM(B_1(X))$ is \textbf{compact}. 
    \item $\MMM(B_1(X))$ is \textbf{Hausdorff}, since $B_1(X)$ is a Gelfand ring~\cite{AA2}.
\end{itemize} 
\noindent In \cite{AA2}, we have introduced $Z_B$-filter, $Z_B$-ultrafilter and studied their interplay with ideals and maximal ideals of $B_1(X)$. It has been observed that a bijective correspondence exists between the collection of all maximal ideals of $B_1(X)$ $\left(\equiv \MMM(B_1(X))\right)$ and the collection of all $Z_B$-ultrafilters on $X$. We now show that the structure space of $B_1(X)$, i.e., $\MMM(B_1(X))$ with \textbf{hull-kernel topology} is homeomorphic to the set of all $Z_B$-ultrafilters on X with Stone-topology.\\\\
\noindent We know that for each $p \in X$, $\mathscr U_p=\{Z \in Z(B_1(X)): p \in Z\}$ is a $Z_B$-ultrafilter on X. In fact $Z[\widehat{M_p}]=\mathscr U_p$. So, we can use the set $X$ as the index set for all $Z_B$-ultrafilters on $X$ of the form $\mathscr U_p$. We enlarge the set $X$ to a bigger set $\widetilde{X}$, which serves as an index set for the family of all $Z_B$-ultrafilters on $X$. For each $p \in \widetilde{X}$, let the
corresponding $Z_B$-ultrafilter be denoted by $\mathscr U^p$ and whenever $p \in X$, we take $\mathscr U^p = \mathscr U_p = \{Z \in Z(B_1(X)): p \in Z\}$. So, $\{{\mathscr U^p: p \in \widetilde{X}}\}$ is the set of all $Z_B$-ultrafilters on $X$.\\
For each $Z \in Z(B_1(X))$, let $\overline{Z}= \{p \in \widetilde{X}: Z \in \mathscr U^p \}$. If $p \in Z$ then $Z \in \mathscr U^p = \mathscr U_p$ and hence, $p \in \overline{Z}$. i.e., $Z \subseteq \overline{Z}$. Also $\overline{X}= \widetilde{X}$. The collection $\mathscr B=\{\overline{Z}: Z \in Z(B_1(X))\}$ forms a base for closed sets for some topology on $\widetilde{X}$, as
\begin{enumerate}
	\item $\overline{\emptyset} = \{p \in \widetilde{X}:\emptyset \in \mathscr U^p\}= \emptyset \implies \emptyset \in \mathscr B$ .
	\item For $Z_1$ and $Z_2$ $\in Z(B_1(X))$, $\overline{Z_1 \cup Z_2}= \overline{Z_1} \cup \overline{Z_2}\implies$ union of two sets in $\mathscr B$ belongs to $\mathscr B$. 
\end{enumerate}
This topology is known as \textbf{Stone-topology}. We simply write $\widetilde{X}$ to mean the space $\widetilde{X}$ with Stone-topology. It is easy to check that for any $Z_1$ and $Z_2$ with $Z_1 \subseteq Z_2$ implies $\overline{Z_1} \subseteq \overline{Z_2}$ and also, $\overline{Z} \cap X= Z$. As a consequence, we get the following result:
\begin{theorem}.
	For any $Z \in Z(B_1(X))$, $\overline{Z}= cl_{\widetilde{X}}Z$. \\
	In particular,  $cl_{\widetilde{X}}X=\widetilde{X}$.
\end{theorem}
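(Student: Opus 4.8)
The plan is to prove the two set-inclusions separately, relying on the general fact that when a family $\mathscr{B}$ is a base for the closed sets of a topology, the closure of any subset $A$ equals the intersection of all members of $\mathscr{B}$ containing $A$. I would first record this characterization; its justification is routine, since every closed set is an intersection of basic closed sets $\overline{Z'}$, and for any closed $F \supseteq A$ written as $F = \bigcap_i \overline{Z_i'}$ one has $A \subseteq F \subseteq \overline{Z_i'}$ for each $i$, so all the $\overline{Z_i'}$ lie in the family being intersected. This reduces the theorem to a comparison of basic closed sets.

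For the inclusion $cl_{\widetilde{X}}Z \subseteq \overline{Z}$, I would simply note that $\overline{Z} \in \mathscr{B}$ is itself a (basic) closed set and that $Z \subseteq \overline{Z}$ has already been established in the construction above. Hence the smallest closed set containing $Z$ is contained in $\overline{Z}$, giving this direction at once.

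The reverse inclusion $\overline{Z} \subseteq cl_{\widetilde{X}}Z$ is where the structure of the Stone topology is actually used. By the closure characterization it suffices to show that every basic closed set $\overline{Z'}$, with $Z' \in Z(B_1(X))$, that contains $Z$ also contains $\overline{Z}$. Given $Z \subseteq \overline{Z'}$, I would intersect with $X$: since $Z \subseteq X$, the identity $\overline{Z'} \cap X = Z'$ noted in the excerpt yields $Z = Z \cap X \subseteq \overline{Z'} \cap X = Z'$, and then the monotonicity $Z \subseteq Z' \Rightarrow \overline{Z} \subseteq \overline{Z'}$ (also already recorded) gives $\overline{Z} \subseteq \overline{Z'}$. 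Intersecting over all such $Z'$ completes this direction and hence the equality $\overline{Z} = cl_{\widetilde{X}}Z$.

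Finally, the ``in particular'' statement follows by specializing to $Z = X$, which belongs to $Z(B_1(X))$ as the zero set of the zero function, together with the already-noted identity $\overline{X} = \widetilde{X}$; thus $cl_{\widetilde{X}}X = \overline{X} = \widetilde{X}$. I expect the only genuinely load-bearing step to be the passage $Z \subseteq \overline{Z'} \Rightarrow Z \subseteq Z'$ obtained by intersecting with $X$; the remainder is bookkeeping within the base-for-closed-sets formalism.
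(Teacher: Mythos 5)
Your proof is correct, and since the paper omits the argument as ``straightforward,'' yours is evidently the intended one: it uses precisely the facts recorded immediately before the theorem ($Z \subseteq \overline{Z}$, monotonicity of $Z \mapsto \overline{Z}$, $\overline{Z} \cap X = Z$, and $\overline{X} = \widetilde{X}$), together with the standard characterization of closure via a base for closed sets. The key step $Z \subseteq \overline{Z'} \Rightarrow Z \subseteq Z'$ by intersecting with $X$ is exactly right, and the rest is the bookkeeping you describe.
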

\begin{proof}
	Straightforward and hence omitted.
\end{proof}
\noindent For each maximal ideal $\widehat{M}$ in $B_1(X)$, $Z[\widehat{M}]$ is a unique $Z_B$-ultrafilter on $X$. Hence, $Z[\widehat{M}]= \mathscr U^p$, for some unique $p \in \widetilde{X}$. Therefore, define a map $\Phi : \MMM(B_1(X)) \rightarrow  \widetilde{X}$ by $\Phi(\widehat{M})=p$, whenever $Z[\widehat{M}]= \mathscr U^p$.
\begin{theorem}
The structure space $\MMM(B_1(X))$ of the ring $B_1(X)$ is homeomorphic to $\widetilde{X}$ with Stone-topology.
\end{theorem}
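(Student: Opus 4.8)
The plan is to show that the map $\Phi : \MMM(B_1(X)) \rightarrow \widetilde{X}$ defined just before the statement is a homeomorphism. This breaks naturally into three tasks: verifying that $\Phi$ is a bijection, that it is continuous, and that it is closed (equivalently, that its inverse is continuous). Since both spaces are compact and Hausdorff (the structure space being compact and Hausdorff as noted in the excerpt, and $\widetilde{X}$ inheriting these properties once we know it is homeomorphic, or which we can check directly), it would in fact suffice to prove $\Phi$ is a continuous bijection, as a continuous bijection between compact Hausdorff spaces is automatically a homeomorphism. I would nonetheless keep the closed-map computation in reserve in case the Hausdorffness of $\widetilde{X}$ is not to be taken for granted independently.

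First I would establish bijectivity. The map $\widehat{M} \mapsto Z[\widehat{M}]$ from maximal ideals to $Z_B$-ultrafilters is a bijection by the correspondence cited from \cite{AA2}, and the assignment $\mathscr U^p \mapsto p$ is a bijection between $Z_B$-ultrafilters and points of $\widetilde{X}$ by the very construction of $\widetilde{X}$ as an index set for all $Z_B$-ultrafilters. Composing these two bijections gives that $\Phi$ is a bijection; the uniqueness of $p$ with $Z[\widehat{M}] = \mathscr U^p$ is exactly what makes $\Phi$ well defined and injective, and surjectivity follows since every $Z_B$-ultrafilter arises as $Z[\widehat{M}]$ for some maximal ideal $\widehat{M}$.

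Next I would show $\Phi$ is a closed map by checking it carries the basic closed sets of $\MMM(B_1(X))$ onto basic closed sets of $\widetilde{X}$. The basic closed sets upstairs are $\widehat{\mathscr M_f} = \{\widehat{M} : f \in \widehat{M}\}$ for $f \in B_1(X)$, and downstairs they are $\overline{Z} = \{p \in \widetilde{X} : Z \in \mathscr U^p\}$ for $Z \in Z(B_1(X))$. The key identity I would prove is
\begin{equation*}
\Phi\!\left(\widehat{\mathscr M_f}\right) = \overline{Z(f)}.
\end{equation*}
To see this, note that for a maximal ideal $\widehat{M}$ with $\Phi(\widehat{M}) = p$, i.e.\ $Z[\widehat{M}] = \mathscr U^p$, membership $f \in \widehat{M}$ is equivalent to $Z(f) \in Z[\widehat{M}] = \mathscr U^p$. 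Here I would invoke the standard fact (carried over from the $C(X)$ theory via the duality of \cite{AA2}) that for a maximal ideal $\widehat{M}$, one has $f \in \widehat{M}$ if and only if $Z(f) \in Z[\widehat{M}]$; the forward direction is immediate, and the reverse uses that $\widehat{M}$ is a $z$-ideal in the Baire-one setting. Granting this, $f \in \widehat{M} \iff Z(f) \in \mathscr U^p \iff p \in \overline{Z(f)}$, which yields the displayed identity. Since $\Phi$ is a bijection carrying the closed base onto the closed base, it is both closed and continuous, hence a homeomorphism.

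The main obstacle I anticipate is the equivalence $f \in \widehat{M} \iff Z(f) \in Z[\widehat{M}]$, i.e.\ ensuring that maximal ideals of $B_1(X)$ are $z$-ideals in the appropriate $Z_B$ sense, so that the correspondence between the two bases is exact rather than merely one containment. This is where the results on the ideal--$Z_B$-filter duality from \cite{AA2} must be applied carefully, since the failure of $B_1(X)$ to behave like $C(X)$ in other respects (as the introduction warns) makes it worth confirming that this particular $z$-ideal property does survive. Everything else—bijectivity and the base-to-base bookkeeping—should be routine once this equivalence is in hand.
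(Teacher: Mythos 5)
Your proposal is correct and follows essentially the same route as the paper: establish that $\Phi$ is a bijection via the maximal ideal--$Z_B$-ultrafilter correspondence from \cite{AA2}, then verify the identity $\Phi\bigl(\widehat{\mathscr M_f}\bigr)=\overline{Z(f)}$ through the chain $f\in\widehat{M}\iff Z(f)\in Z[\widehat{M}]\iff Z(f)\in\mathscr U^p\iff p\in\overline{Z(f)}$, concluding that $\Phi$ exchanges the two closed bases and is therefore a homeomorphism. The equivalence $f\in\widehat{M}\iff Z(f)\in Z[\widehat{M}]$ that you flag as the main obstacle is exactly the step the paper takes for granted from the duality in \cite{AA2}, so your caution is well placed but does not constitute a departure from the paper's argument.
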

\begin{proof}
The map $\Phi : \MMM(B_1(X)) \rightarrow  \widetilde{X}$ defined by $\Phi(\widehat{M})=p$, whenever $Z[\widehat{M}]= \mathscr U^p$ is a bijection between $\MMM(B_1(X))$ and $\widetilde{X}$, because it was proved in \cite{AA2} that $\widehat{M},\widehat{N} \in \MMM(B_1(X))$ with $\widehat{M} \neq \widehat{N}$ implies $Z[\widehat{M}] \neq Z[\widehat{N}]$. Also, the collection $\{\widehat{\mathscr M_f}: f \in B_1(X)\}$ is a base for closed sets for the structure space of $B_1(X)$, i.e., $\MMM(B_1(X))$ with Hull-Kernel topology, where $\widehat{\mathscr M_f}= \{\widehat{M}\in \MMM(B_1(X)): f \in \widehat{M}\}$. \\
For any $f \in B_1(X)$ and $\widehat{M} \in \MMM(B_1(X))$ $f \in \widehat{M} \iff Z(f) \in Z[\widehat{M}] \iff Z(f) \in \mathscr U^p \iff p \in cl_{\widetilde{X}}Z(f)$, where $\Phi(\widehat{M})=p$. Hence $\Phi(\widehat{\mathscr M_f})=cl_{\widetilde{X}}Z(f)=\overline{Z(f)}$, for any $f \in B_1(X)$. Clearly, $\Phi$ exchanges the basic closed sets between $\MMM(B_1(X))$ and $\widetilde{X}$. Therefore, $\Phi$ is a homeomorphism between the structure space of $B_1(X)$ and $\widetilde{X}$ with Stone-topology.
\end{proof}
\begin{corollary}
$\widetilde{X}$ with Stone-topology is a compact Hausdorff space. 
\end{corollary}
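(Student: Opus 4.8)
The plan is to deduce the result directly from the homeomorphism established in the preceding theorem, rather than verifying compactness and the Hausdorff property of $\widetilde{X}$ from scratch. Both of these are topological invariants, so once we know that $\Phi : \MMM(B_1(X)) \ra \widetilde{X}$ is a homeomorphism, it suffices to transport them across $\Phi$.

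First I would recall the two structural facts already recorded in this section: the structure space $\MMM(B_1(X))$ of any commutative ring with unity is compact, and since $B_1(X)$ is a Gelfand ring~\cite{AA2}, $\MMM(B_1(X))$ is moreover Hausdorff. Taken together, these say that $\MMM(B_1(X))$ with the hull-kernel topology is a compact Hausdorff space. Next, invoking the homeomorphism $\Phi$ from the previous theorem, I would simply note that a homeomorphic image of a compact space is compact and a homeomorphic image of a Hausdorff space is Hausdorff. Since $\widetilde{X}$ with the Stone-topology is homeomorphic, via $\Phi$, to $\MMM(B_1(X))$, it inherits both properties, which completes the argument.

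I do not anticipate any genuine obstacle here, as the statement is an immediate corollary of the homeomorphism. The only point worth double-checking is that the Stone-topology on $\widetilde{X}$ appearing in the corollary is exactly the topology carried over by $\Phi$; this is guaranteed by the previous theorem, where $\Phi$ was shown to exchange the basic closed sets $\widehat{\mathscr M_f}$ and $\overline{Z(f)}$ that define the hull-kernel and Stone topologies respectively. Hence no independent verification of compactness or separation on the $\widetilde{X}$ side is required.
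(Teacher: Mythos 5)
Your proposal is correct and follows exactly the paper's own argument: the paper's proof is simply ``Immediate, as $\MMM(B_1(X))$ is a compact Hausdorff space,'' relying on the homeomorphism $\Phi$ established in the preceding theorem. Your additional remark about $\Phi$ exchanging the basic closed sets is a reasonable sanity check but adds nothing beyond what the previous theorem already guarantees.
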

\begin{proof}
Immediate, as $\MMM(B_1(X))$ is a compact Hausdorff space.
\end{proof}
\noindent The following result describes the collection of all maximal ideals of a Tychonoff space $X$. 
\begin{theorem}
A complete description of maximal ideals of the ring $B_1(X)$ is given by $\{\widehat{M^p}: p \in \widetilde{X}\}$, where $\widehat{M^p}= \{f \in B_1(X): p \in cl_{\widetilde{X}}Z(f)\}$. Further, if $p \neq q$ in $\widetilde{X}$ then $\widehat{M^p} \neq \widehat{M^q}$.
\end{theorem}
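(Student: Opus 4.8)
The plan is to piggyback entirely on the homeomorphism $\Phi : \MMM(B_1(X)) \to \widetilde{X}$ and, in particular, on the chain of equivalences
\[
f \in \widehat{M} \iff Z(f) \in Z[\widehat{M}] \iff Z(f) \in \mathscr{U}^p \iff p \in cl_{\widetilde{X}} Z(f),
\]
already recorded in the proof of the preceding theorem (with $p = \Phi(\widehat{M})$). The real content sits in the reverse implication $Z(f) \in Z[\widehat{M}] \Rightarrow f \in \widehat{M}$, which encodes the fact that a maximal ideal of $B_1(X)$ is a $Z_B$-ideal; since this is inherited from \cite{AA2} and is already invoked above, nothing new has to be proved for it here.

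First I would fix an arbitrary maximal ideal $\widehat{M} \in \MMM(B_1(X))$ and set $p = \Phi(\widehat{M})$, so that $Z[\widehat{M}] = \mathscr{U}^p$. Reading off the displayed equivalence gives $f \in \widehat{M} \iff p \in cl_{\widetilde{X}} Z(f)$, i.e. $\widehat{M} = \widehat{M^p}$. Because $\Phi$ is onto $\widetilde{X}$, every index $p \in \widetilde{X}$ is realized as $\Phi(\widehat{M})$ for some maximal ideal $\widehat{M}$, and the identity just obtained shows that this $\widehat{M}$ is precisely $\widehat{M^p}$. Hence each $\widehat{M^p}$ is a genuine maximal ideal and $\{\widehat{M^p} : p \in \widetilde{X}\}$ exhausts $\MMM(B_1(X))$, giving the claimed complete description.

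For the last assertion, suppose $p \neq q$ in $\widetilde{X}$. Since $p \mapsto \mathscr{U}^p$ is a bijection onto the set of $Z_B$-ultrafilters, $\mathscr{U}^p \neq \mathscr{U}^q$, so without loss of generality there is a zero set $Z \in \mathscr{U}^p \setminus \mathscr{U}^q$. Writing $Z = Z(f)$ for some $f \in B_1(X)$, we get $p \in cl_{\widetilde{X}} Z(f)$ but $q \notin cl_{\widetilde{X}} Z(f)$, that is $f \in \widehat{M^p}$ while $f \notin \widehat{M^q}$; thus $\widehat{M^p} \neq \widehat{M^q}$. (Alternatively, this is immediate from the bijectivity of $\Phi$ together with $\widehat{M^p} = \Phi^{-1}(p)$.) The only point requiring any care is that membership $Z(f) \in \mathscr{U}^p$ is genuinely equivalent to the topological condition $p \in cl_{\widetilde{X}} Z(f)$, which is exactly the translation $\overline{Z} = cl_{\widetilde{X}} Z$ established earlier; everything else is bookkeeping with the already-proved bijection.
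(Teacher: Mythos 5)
Your proposal is correct and follows essentially the same route as the paper: both rest on the bijective correspondence between maximal ideals and $Z_B$-ultrafilters (the paper writes $\widehat{M^p} = Z_B^{-1}[\mathscr U^p]$ where you invoke the map $\Phi$ of the preceding theorem, which encodes the same correspondence) together with the translation $Z(f) \in \mathscr U^p \iff p \in cl_{\widetilde{X}}Z(f)$. Your distinctness argument via a separating zero set is just a slightly more explicit version of the paper's one-line appeal to the injectivity of $Z_B^{-1}$ on ultrafilters.
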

\begin{proof}
A $Z_B$-ultrafilter $\mathscr U^p$ on $X$ corresponds to a unique point $p$ in $\widetilde{X}$ and for all $ Z \in Z(B_1(X))$, $Z \in \mathscr U^p$ if and only if $p \in \overline{Z}$. i.e., $p \in cl_{\widetilde{X}}Z(f)$.\\
Since $\{\mathscr U^p : p \in \widetilde{X}\}$ is the collection of all $Z_B$-ultrafilters on $X$, it follows that $\{Z_B^{-1}[\mathscr U^p]: p \in \widetilde{X}\}$ is the collection of all maximal ideals of $B_1(X)$. Let $Z_B^{-1}[\mathscr U^p]= \widehat{M^p}$. Then $\widehat{M^p}=\{f \in B_1(X): Z(f) \in \mathscr U^p\}$ $=\{f \in B_1(X):p \in cl_{\widetilde{X}}Z(f)\}$.\\
Again if  $p \neq q$ in $\widetilde{X}$ then $\mathscr U^p \neq \mathscr U^q$, which implies $Z_B^{-1}[\mathscr U^p] \neq Z_B^{-1}[\mathscr U^q]$ and so, $\widehat{M^p} \neq \widehat{M^q}$.
\end{proof}

 \begin{theorem}
 	$\widehat{M^p}$ is a fixed maximal ideal in $B_1(X)$ if and only if $p \in X$.
 \end{theorem}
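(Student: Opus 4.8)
The plan is to translate the ring-theoretic statement into the language of $Z_B$-ultrafilters by means of the correspondence $\widehat{M^p} = \{f \in B_1(X) : Z(f) \in \mathscr{U}^p\}$ from the preceding theorem. I would begin by recalling that a maximal ideal $\widehat{M}$ of $B_1(X)$ is called \emph{fixed} precisely when $\bigcap_{f \in \widehat{M}} Z(f) \neq \emptyset$. Since $\{Z(f) : f \in \widehat{M^p}\}$ is exactly the $Z_B$-ultrafilter $\mathscr{U}^p$, the statement that $\widehat{M^p}$ is fixed becomes equivalent to $\bigcap_{Z \in \mathscr{U}^p} Z \neq \emptyset$. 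Thus the whole theorem reduces to proving that $\mathscr{U}^p$ has nonempty intersection if and only if $p \in X$.

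For the sufficiency, I would take $p \in X$ and use that $\mathscr{U}^p = \mathscr{U}_p = \{Z \in Z(B_1(X)) : p \in Z\}$ by definition, so that every member of $\mathscr{U}^p$ contains $p$. Hence $p \in \bigcap_{Z \in \mathscr{U}^p} Z$, the intersection is nonempty, and $\widehat{M^p}$ is fixed. (Equivalently, one checks $\widehat{M^p} = \widehat{M_p} = \{f \in B_1(X) : f(p) = 0\}$, whose members all vanish at $p$.)

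For the necessity, I would assume $\widehat{M^p}$ fixed and pick a point $q \in \bigcap_{Z \in \mathscr{U}^p} Z$; note $q \in X$. Then $q \in Z$ for every $Z \in \mathscr{U}^p$, which says precisely that $\mathscr{U}^p \subseteq \mathscr{U}_q = \{Z \in Z(B_1(X)) : q \in Z\}$. Here $\mathscr{U}_q$ is itself a $Z_B$-ultrafilter, as recorded for each point of $X$ in the preliminaries, and in particular a proper $Z_B$-filter. The decisive step is to upgrade this inclusion to an equality: because $\mathscr{U}^p$ is a $Z_B$-ultrafilter, i.e.\ maximal among proper $Z_B$-filters, and sits inside the proper $Z_B$-filter $\mathscr{U}_q$, maximality forces $\mathscr{U}^p = \mathscr{U}_q = \mathscr{U}^q$. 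The injectivity of $p \mapsto \mathscr{U}^p$ (again from the previous theorem) then gives $p = q$, and since $q \in X$ we conclude $p \in X$.

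I expect the argument to be mostly a repackaging of facts already in hand, so the only real obstacle is justifying that the containment $\mathscr{U}^p \subseteq \mathscr{U}_q$ becomes equality. This rests on two ingredients from \cite{AA2}: that $\mathscr{U}_q$ is a proper $Z_B$-filter (indeed a $Z_B$-ultrafilter) for every $q \in X$, and that $Z_B$-ultrafilters are maximal proper $Z_B$-filters. Once these are in place, no new construction or estimate is required.
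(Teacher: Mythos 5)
Your argument is correct, and its overall shape matches the paper's: both directions come down to identifying $\widehat{M^p}$ with a point-based object attached to some $q \in X$ and then invoking the injectivity of the indexing ($p \neq q$ implies $\widehat{M^p} \neq \widehat{M^q}$, equivalently $\mathscr{U}^p \neq \mathscr{U}^q$). The one place you genuinely diverge is the converse. The paper quotes, as a known fact from \cite{AA2}, that the fixed maximal ideals of $B_1(X)$ are exactly $\{\widehat{M_p} : p \in X\}$ with $\widehat{M_p} = \{f : f(p)=0\}$, so a fixed $\widehat{M^q}$ must equal some $\widehat{M_p} = \widehat{M^p}$ and injectivity finishes it. You instead unwind the definition of \emph{fixed} at the level of $Z_B$-ultrafilters: a point $q \in \bigcap_{Z \in \mathscr{U}^p} Z$ gives $\mathscr{U}^p \subseteq \mathscr{U}_q$, and maximality of the ultrafilter $\mathscr{U}^p$ inside the proper $Z_B$-filter $\mathscr{U}_q$ forces equality, whence $p = q$. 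This is in effect a self-contained re-derivation of the classification the paper cites; it costs a few extra lines but rests only on the two ingredients you name (each $\mathscr{U}_q$ is a $Z_B$-ultrafilter, and ultrafilters are maximal proper $Z_B$-filters), both of which are stated in the paper or in \cite{AA2}. I see no gap; just note that your forward direction and the paper's are the same computation ($\overline{Z(f)} \cap X = Z(f)$, so membership of $p$ in every $Z \in \mathscr{U}_p$ is exactly $f(p)=0$ for all $f \in \widehat{M^p}$).
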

\begin{proof}
Let $p \in X$. Then $\widehat{M^p}=\{f \in B_1(X)\ : \ p \in cl_{\widetilde{X}}Z(f)\}=\{f \in B_1(X) \ : \ p \in \overline{Z(f)}\}$. We know $\overline{Z(f)}= \{p \in \widetilde{X}:Z(f) \in \mathscr U^p\}$. So, $p \in X \cap \overline{Z(f)} \implies p \in Z(f) \implies f(p)=0$. i.e., $\widehat{M^p}= \{f \in B_1(X): f(p)=0\}= \widehat{M_p}=$ a fixed maximal ideal. Conversely, $\widehat{M^q}$ is a fixed maximal ideal for some $q \in \widetilde{X}$. Since the collection of all fixed maximal ideals in the ring $B_1(X)$ is $\{\widehat{M_p}: p \in X\}$, where $\widehat{M_p}=\{f \in B_1(X)\ : f(p)=0\}$, we get $\widehat{M^q}=\widehat{M_p}$, for some $p \in X$. Hence, $\widehat{M^q}=\widehat{M_p}= \widehat{M^p}$ which implies $q =p \in X$.
\end{proof}
\section{Is $\MMM({B_1(X)})$ a compactification of $X$?}
\noindent As proposed in the introduction of this paper, we introduce two weaker forms of embedding which we call $F_\sigma$-embedding and weak $F_\sigma$-embedding of $X$ in $\MMM(B_1(X))$. Before we define such weak embeddings, we recall a result from ~\cite{LV} :
\begin{theorem}\cite{LV} \label{P1 thm_4.1} \textnormal{(i)} For any topological space $X$ and any metric space $Y$, $B_1(X,Y)$  $\subseteq \mathscr{F_\sigma}(X,Y)$, where $B_1(X,Y)$ denotes the collection of Baire one functions from $X$ to $Y$ and $\mathscr{F_\sigma}(X,Y)=\{f:X\rightarrow Y : f^{-1}(G)$ is an $F_\sigma$ set, for any open set $G \subseteq Y$\}.\\
 \textnormal{(ii)} For a normal topological space $X$, $B_1(X,\mathbb{R})$  $= \mathscr{F_\sigma}(X,\mathbb{R})$.
 \end{theorem}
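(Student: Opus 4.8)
For the statement as I read it, the natural split is between inclusion (i), valid for an arbitrary domain, and the content of (ii). Since (i) already delivers $B_1(X,\RR)\subseteq\mathscr{F}_\sigma(X,\RR)$, the substance of (ii) is the \emph{reverse} inclusion $\mathscr{F}_\sigma(X,\RR)\subseteq B_1(X,\RR)$ for normal $X$. The plan is to treat the two inclusions by entirely different means, and I expect essentially all the difficulty to sit in the reverse one.

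For (i), fix $f\in B_1(X,Y)$ with $f=\lim_n f_n$ pointwise, each $f_n$ continuous, and let $G\subseteq Y$ be open; I may assume $G\neq Y$ since $f^{-1}(Y)=X$ is closed, hence $F_\sigma$. First I would pass from the $Y$-valued $f$ to the scalar function $g(x)=d\big(f(x),Y\setminus G\big)$, where $d$ is the metric on $Y$. Because $y\mapsto d(y,Y\setminus G)$ is ($1$-Lipschitz, hence) continuous, each $g_n(x)=d\big(f_n(x),Y\setminus G\big)$ is continuous and $g_n\to g$ pointwise, so $g\in B_1(X,\RR)$; moreover $f^{-1}(G)=\{x:g(x)>0\}$. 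It then remains to invoke the identity
\[
\{x:g(x)>0\}=\bigcup_{k\ge 1}\ \bigcup_{N\ge 1}\ \bigcap_{n\ge N}\Big\{x: g_n(x)\ge \tfrac1k\Big\},
\]
in which each $\{g_n\ge 1/k\}$ is closed, each inner intersection is closed, and the outer union is countable; hence $f^{-1}(G)$ is $F_\sigma$. This uses only the metric on $Y$ and no separation axiom on $X$, matching the full generality of (i).

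For the reverse inclusion in (ii), I would proceed in three stages. First, reduce to a bounded function: composing with the homeomorphism $\theta(t)=t/(1+\abs{t})$ of $\RR$ onto $(-1,1)$ preserves membership in $\mathscr{F}_\sigma$ and in $B_1$ in either direction (via $\theta^{-1}$, after a harmless truncation of the continuous approximants), so I may assume $f$ takes values in a bounded interval. Second, I would approximate $f$ uniformly by finite-valued Baire one functions: given a partition $t_0<\dots<t_N$ of the range of mesh below $1/n$, the sets $\{f<t_j\}$ and $\{f>t_j\}$ are disjoint $F_\sigma$ sets, and on a normal space two disjoint $F_\sigma$ sets can be separated by a set $D_j$ that is simultaneously $F_\sigma$ and $G_\delta$ with $\{f<t_j\}\subseteq D_j\subseteq\{f\le t_j\}$; arranging $D_0\subseteq\dots\subseteq D_N$ and setting $\phi_n=t_j$ on $D_j\setminus D_{j-1}$ yields a finite-valued $\phi_n$ with $\|f-\phi_n\|_\infty<1/n$ whose level sets are again both $F_\sigma$ and $G_\delta$. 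Third, I would use that the characteristic function of a set $A$ that is both $F_\sigma$ and $G_\delta$ is Baire one on a normal space: writing $A=\bigcup_m F_m=\bigcap_m H_m$ with $F_m$ closed increasing and $H_m$ open decreasing, Urysohn's lemma supplies continuous $h_m$ with $h_m=1$ on $F_m$ and $h_m=0$ off $H_m$, and $h_m\to\chi_A$ pointwise. Thus each $\phi_n$ is a finite linear combination of Baire one functions, hence Baire one, and since $B_1(X,\RR)$ is closed under uniform limits (a standard telescoping/diagonal argument on the continuous approximants), $f=\lim_n\phi_n$ uniformly forces $f\in B_1(X,\RR)$.

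The main obstacle is the whole of the second and third stages of (ii): manufacturing Baire one approximants out of the purely descriptive $F_\sigma$ hypothesis. This is precisely the classical Lebesgue--Hausdorff phenomenon, and it is where normality is indispensable, both in separating disjoint $F_\sigma$ sets by an ambiguous ($F_\sigma\cap G_\delta$) set and in the Urysohn step that realizes $\chi_A$ as a pointwise limit of continuous functions. The contrast with (i), which needs no separation axiom whatsoever, is exactly what explains why the theorem only upgrades the inclusion to an equality under normality.
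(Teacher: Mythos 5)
First, a point about the comparison you asked for: the paper does not prove this statement at all --- it is quoted from the reference [LV] (Vesel\'y) and used as a black box --- so your attempt can only be judged on its own merits. Your proof of (i) is correct and is the standard argument. In (ii), the reduction to bounded $f$ via $\theta(t)=t/(1+\abs{t})$, the fact that $\chi_A\in B_1(X)$ when $A$ is simultaneously $F_\sigma$ and $G_\delta$ in a normal space, and the closure of $B_1(X,\RR)$ under uniform limits are all fine.

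The genuine gap is the separation lemma underlying your second stage: it is \emph{false} that in a normal space two disjoint $F_\sigma$ sets can be separated by a set that is both $F_\sigma$ and $G_\delta$. Already in $\RR$, take $A=\QQ$ and $B=\QQ+\sqrt2$: if $D$ were such a separator with $A\subseteq D$ and $D\cap B=\emptyset$, then $D$ would be a dense $G_\delta$, while $\RR\setminus D$ (the complement of an $F_\sigma$, containing the dense set $B$) would also be a dense $G_\delta$, contradicting the Baire category theorem. Separation by ambiguous sets holds for disjoint sets of the \emph{multiplicative} class ($G_\delta$), not the additive one; this is the classical failure of the separation property for $\Sigma^0_2$. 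The failure really does defeat your construction rather than just the lemma in the abstract: the function $f=\sum_n 2^{-n}\bigl(\chi_{\{q_n+\sqrt2\}}-\chi_{\{q_n\}}\bigr)$ is Baire one (hence lies in $\mathscr{F}_\sigma$ by your part (i)), yet no ambiguous $D$ satisfies $\{f<0\}\subseteq D\subseteq\{f\le 0\}$, so a partition point $t_j=0$ cannot be processed. The repair is to interpose ambiguous sets between the disjoint $G_\delta$ sets $\{f\le t_j\}$ and $\{f\ge t_{j+1}\}$, i.e.\ to work with overlapping intervals, which is the reduction theorem for the countable $F_\sigma$ cover $\bigl\{f^{-1}\bigl((\tfrac{k-1}{n},\tfrac{k+1}{n})\bigr)\bigr\}_k$; but even that needs care, because the standard reduction construction produces sets of the form (closed)$\cap$(open), which are guaranteed to be $F_\sigma$ only when open sets are $F_\sigma$, i.e.\ under perfect normality. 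Making this work for merely normal $X$ is precisely the delicate content of Vesel\'y's theorem, and it is the step your outline does not reach.
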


\begin{definition}
A function $f : X \rightarrow Y$ is called \\
(i) \textbf{$F_\sigma$-continuous} if for any open set $U$ of $Y$ $f^{-1}(U)$ is an $F_\sigma$ set in $X$.\\
(ii) \textbf{weak $F_\sigma$ continuous} if for any basic open set $U$ of $Y$ $f^{-1}(U)$ is an $F_\sigma$ set in $X$.\\
(iii) \textbf{$F_\sigma$-embedding} if $f$ is injective, $F_\sigma$ continuous and $f^{-1} : f(X) \rightarrow X$ is continuous.\\
(iv) \textbf{weak $F_\sigma$-embedding} if $f$ is injective, weak $F_\sigma$ continuous and  $f^{-1} : f(X) \rightarrow X$ is continuous.
\end{definition}
\noindent It is quite easy to observe that a function $f : X \rightarrow Y$ is \textbf{$F_\sigma$-continuous} (respectively, \textbf{weak $F_\sigma$-continuous}) if and only if for any closed set (respectively, basic closed set) $C$ of $Y$ $f^{-1}(C)$ is a $G_\delta$ set in $X$.
\begin{remark}
In general, $F_\sigma$-continuous function is always weak $F_\sigma$-continuous. If each open set of $Y$ is expressible as a countable union of basic open sets then weak $F_\sigma$-continuity coincides with $F_\sigma$-continuity of the function. Since every open set of $\RR$ is a countable union of disjoint open intervals, a function $f : X \rightarrow \RR$ is weak $F_\sigma$-continuous if and only if it is $F_\sigma$-continuous.    
\end{remark} 
\begin{theorem}\label{EMB}
A $T_4$ space $X$ is densely weak $F_\sigma$-embedded in $\MMM(B_1(X))$.
\end{theorem}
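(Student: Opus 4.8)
The plan is to show that the natural map $e : X \to \MMM(B_1(X))$ sending a point $p$ to the fixed maximal ideal $\widehat{M_p}$ is a weak $F_\sigma$-embedding with dense image. Since $\Phi : \MMM(B_1(X)) \to \widetilde{X}$ is a homeomorphism and $\Phi(\widehat{M_p}) = p$ (because $Z[\widehat{M_p}] = \mathscr U_p = \mathscr U^p$), the composite $\Phi \circ e$ is just the inclusion $\iota : X \hookrightarrow \widetilde{X}$. Thus it suffices to prove that $\iota$ is a dense weak $F_\sigma$-embedding of $(X,\tau)$ into $\widetilde{X}$ with the Stone topology, and then transport the conclusion back through $\Phi$. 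Injectivity of $\iota$ is immediate: for $p \neq q$ in the Tychonoff space $X$ there is $f \in C(X) \subseteq B_1(X)$ with $f(p) = 0$ and $f(q) = 1$, so $Z(f) \in \mathscr U_p \setminus \mathscr U_q$ and hence $\mathscr U^p \neq \mathscr U^q$; density is already recorded, since $cl_{\widetilde X}X = \widetilde X$.

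For weak $F_\sigma$-continuity I would use the fact that the basic closed sets of the Stone topology are exactly the sets $\overline{Z}$ with $Z \in Z(B_1(X))$, together with the identity $\iota^{-1}(\overline{Z}) = \overline{Z} \cap X = Z$. Writing $Z = Z(f)$ for some $f \in B_1(X)$, the task reduces to showing that every zero set of a Baire one function is a $G_\delta$ set in $X$. This is where the hypothesis that $X$ is $T_4$ (in particular normal) enters decisively: by Theorem~\ref{P1 thm_4.1}(ii), $B_1(X,\RR) = \mathscr{F_\sigma}(X,\RR)$, so $f^{-1}(\RR \setminus \{0\})$ is an $F_\sigma$ set and therefore $Z(f) = f^{-1}(\{0\})$ is a $G_\delta$ set. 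Hence the $\iota$-preimage of every basic closed set is $G_\delta$, which is precisely weak $F_\sigma$-continuity. One cannot upgrade this to full $F_\sigma$-continuity, since a general closed set of $\widetilde X$ is an arbitrary intersection $\bigcap_\alpha \overline{Z_\alpha}$, whose preimage $\bigcap_\alpha Z_\alpha$ is an intersection of $G_\delta$ sets and need not be $G_\delta$; this is exactly why only the weak notion appears in the statement.

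It remains to check that $\iota^{-1} : \iota(X) \to (X,\tau)$ is continuous, i.e., that every $\tau$-closed set $A \subseteq X$ is closed in the subspace topology that $\widetilde X$ induces on $X$. Here I would invoke the full strength of $T_4$: given any $p \notin A$, normality together with $T_1$ (so that $\{p\}$ is closed) yields, via Urysohn's lemma, a function $f_p \in C(X) \subseteq B_1(X)$ with $f_p|_A \equiv 0$ and $f_p(p) = 1$; then $A \subseteq Z(f_p)$ while $p \notin Z(f_p)$. Consequently $A = \bigcap_{p \notin A} Z(f_p) = \bigl(\bigcap_{p \notin A} \overline{Z(f_p)}\bigr) \cap X$, exhibiting $A$ as the trace on $X$ of a closed set of $\widetilde X$. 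This proves $\iota^{-1}$ continuous and completes the verification that $\iota$, and hence $e$, is a dense weak $F_\sigma$-embedding.

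The step I expect to be the main obstacle is the $G_\delta$-property of zero sets in the weak $F_\sigma$-continuity argument: without normality, a zero set of a Baire one function need not be $G_\delta$ (the natural expression of $Z(f)$ for $f = \lim f_n$ only places it at the $F_{\sigma\delta}$ level), so the reduction genuinely depends on Theorem~\ref{P1 thm_4.1}(ii) and cannot be carried out for an arbitrary Tychonoff $X$. The continuity of $\iota^{-1}$ is comparatively routine once Urysohn's lemma is available, and it is the interplay of these two distinct uses of $T_4$ that forces the \emph{weak} qualifier throughout.
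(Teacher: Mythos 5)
Your proof is correct and follows essentially the same route as the paper: the same map $x \mapsto \widehat{M}_x$, the same reduction of weak $F_\sigma$-continuity to the $G_\delta$-property of $Z(f)$, and the same density and inverse-continuity checks (the paper works directly in the hull-kernel topology rather than transporting through $\Phi$, and verifies $(\psi^{-1})^{-1}(Z(g)) = \widehat{\mathscr M}_g \cap \psi(X)$ on the zero-set base instead of invoking Urysohn's lemma, but these are cosmetic differences). One small correction to your closing commentary: the inclusion $B_1(X,\RR) \subseteq \mathscr{F}_\sigma(X,\RR)$ is part (i) of Theorem~\ref{P1 thm_4.1} and holds for \emph{every} topological space $X$, so the $G_\delta$-property of $Z(f)$ does not actually require normality --- only the reverse inclusion in part (ii) does.
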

\begin{proof}
Define $\psi : X \ra \MMM(B_1(X))$ by $\psi(x) = \widehat{M}_x$, where $\widehat{M}_x = \{f \in B_1(X) \ : f(x) = 0\ \}$. Then $\psi$ is an injective function, as $x \neq y$ implies that $\widehat{M}_x \neq \widehat{M}_y$. Since $\{\widehat{\mathscr M_f} \ : \ f \in B_1(X)\}$ is a base for closed sets for the hull-kernel topology on $\MMM(B_1(X))$, $\psi^{-1}(\widehat{\mathscr M_f}) = \{x\in X \ : \ \widehat{M}_x \in \widehat{\mathscr M_f}\}$ = $Z(f)$ which is a $G_\delta$ set in $X$. Since $\psi$ pulls back all basic closed sets of $\MMM(B_1(X))$ to $G_\delta$ sets of $X$, $\psi$ is a weak $F_\sigma$-continuous function. \\\\
$\{Z(g) \ : \ g \in C(X)\}$ is a base for closed sets for the topology of $X$, $(\psi^{-1})^{-1}(Z(g)) = \{\psi(x) \ : \ x \in Z(g)\}$ $= \{\widehat{M}_x \ : \ g \in \widehat{M}_x\}$ $= \widehat{\mathscr M_g} \cap \psi(X)$, a closed set in $\psi(X)$. Hence $\psi^{-1} : \psi(X) \ra X$ is a continuous function. 
Therefore, $X$ is weak $F_\sigma$-embedded in $\MMM(B_1(X))$. \\\\
That $\psi(X)$ is dense in $\MMM(B_1(X))$ follows from the next observation :
\begin{eqnarray*}
\overline{\psi(X)} &=& \{\widehat{M} \in B_1(X) \ :  \ \widehat{M} \supseteq \bigcap_{x\in X}\widehat{M}_x\}\\
				   &=& \{\widehat{M} \in B_1(X) \ :  \ \widehat{M} \supseteq \{0\}\}\\
				   &=& \MMM(B_1(X)).
\end{eqnarray*}
\end{proof}
\begin{corollary}
If every closed set of $\MMM(B_1(X))$ is expressible as a countable intersection of $\{\widehat{\mathscr M_f} \ : \ f \in B_1(X)\}$ then the $T_4$-space $X$ is densely $F_\sigma$-embedded in $\MMM(B_1(X))$.
\end{corollary}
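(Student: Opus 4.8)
The plan is to build directly on Theorem~\ref{EMB}, since the corollary merely upgrades the conclusion from \emph{weak} $F_\sigma$-embedding to full $F_\sigma$-embedding under the stated hypothesis. Theorem~\ref{EMB} already supplies the map $\psi : X \ra \MMM(B_1(X))$, $\psi(x) = \widehat{M}_x$, and establishes that $\psi$ is injective, that $\psi^{-1} : \psi(X) \ra X$ is continuous, and that $\psi(X)$ is dense. All three of these properties are retained verbatim, so the only thing left to verify is that $\psi$ is genuinely $F_\sigma$-continuous, i.e. that $\psi^{-1}(C)$ is a $G_\delta$ set in $X$ for \emph{every} closed set $C$ of $\MMM(B_1(X))$, not merely for the basic closed sets $\widehat{\mathscr M_f}$.

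First I would recall from the proof of Theorem~\ref{EMB} the key computation that $\psi^{-1}(\widehat{\mathscr M_f}) = Z(f)$, which is a $G_\delta$ set in $X$ because $X$ is $T_4$ (equivalently, by Theorem~\ref{P1 thm_4.1}(ii), every Baire one function on a normal space is $\mathscr F_\sigma$, so its zero set is a $G_\delta$). Thus $\psi$ pulls each basic closed set back to a $G_\delta$. Next I would invoke the hypothesis: every closed set $C$ of $\MMM(B_1(X))$ can be written as $C = \bigcap_{n\in\NN} \widehat{\mathscr M_{f_n}}$ for some countable family $\{f_n\} \subseteq B_1(X)$. Taking preimages and using that preimage commutes with intersection gives
\[
\psi^{-1}(C) = \psi^{-1}\!\left(\bigcap_{n\in\NN} \widehat{\mathscr M_{f_n}}\right) = \bigcap_{n\in\NN} \psi^{-1}(\widehat{\mathscr M_{f_n}}) = \bigcap_{n\in\NN} Z(f_n).
\]
Each $Z(f_n)$ is a $G_\delta$ set in $X$, and a countable intersection of $G_\delta$ sets is again a $G_\delta$ set. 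Hence $\psi^{-1}(C)$ is a $G_\delta$ set in $X$, which is exactly the statement that $\psi$ is $F_\sigma$-continuous (using the characterization, noted just before Theorem~\ref{EMB}, that $F_\sigma$-continuity is equivalent to pulling back every closed set to a $G_\delta$ set).

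Combining this with the injectivity of $\psi$ and the continuity of $\psi^{-1}$ already established in Theorem~\ref{EMB}, $\psi$ satisfies all the requirements of an $F_\sigma$-embedding, and the density of $\psi(X)$ carries over unchanged; therefore $X$ is densely $F_\sigma$-embedded in $\MMM(B_1(X))$. I do not anticipate a serious obstacle here: the entire argument is a routine bootstrapping from the basic-closed-set case to the arbitrary-closed-set case, and the hypothesis is precisely engineered so that the countable-intersection closure property of $G_\delta$ sets does the work. The only point demanding a little care is to confirm that the characterization of $F_\sigma$-continuity via $G_\delta$ preimages of closed sets is being applied correctly, namely that it suffices to check the condition on a base for the closed sets provided every closed set is a \emph{countable} intersection of basic ones --- which is exactly what distinguishes this corollary from the weaker Theorem~\ref{EMB}.
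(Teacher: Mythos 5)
Your proposal is correct and is exactly the argument the paper intends: the corollary is stated without proof because it follows from Theorem~\ref{EMB} together with the remark preceding it (weak $F_\sigma$-continuity upgrades to $F_\sigma$-continuity when every closed set is a countable intersection of basic closed sets), and your explicit computation $\psi^{-1}(\bigcap_n \widehat{\mathscr M_{f_n}}) = \bigcap_n Z(f_n)$, a countable intersection of $G_\delta$ sets, is precisely how that upgrade works. Nothing is missing.
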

\noindent We shall show that every $F_\sigma$-continuous function from a $T_4$ space $X$ to a compact Hausdorff space $Y$ has a unique continuous extension on $\MMM(B_1(X))$. To establish our claim, we need a lemma:
\begin{lemma}\label{Lem}
Let $X$ be a normal space and $\psi : X \ra Y$ (where $Y$ is any topological space) be a $F_\sigma$-continuous function. For each $h \in C(Y)$, $h\circ \psi \in B_1(X)$.
\end{lemma}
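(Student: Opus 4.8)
The goal is to show that if $\psi : X \to Y$ is $F_\sigma$-continuous (with $X$ normal) and $h \in C(Y)$, then the composite $h \circ \psi$ is a Baire one function on $X$. My strategy is to exploit part (ii) of Theorem~\ref{P1 thm_4.1}, which tells us that on a normal space $X$ the class $B_1(X,\RR)$ coincides exactly with $\mathscr{F_\sigma}(X,\RR)$. Thus it suffices to prove that $h\circ\psi$ belongs to $\mathscr{F_\sigma}(X,\RR)$, i.e.\ that $(h\circ\psi)^{-1}(G)$ is an $F_\sigma$ subset of $X$ for every open set $G \subseteq \RR$.

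The computation I would carry out is the standard preimage chase. For an open set $G \subseteq \RR$, write
\[
(h\circ\psi)^{-1}(G) = \psi^{-1}\bigl(h^{-1}(G)\bigr).
\]
Since $h$ is continuous, $h^{-1}(G)$ is an open subset of $Y$. Because $\psi$ is $F_\sigma$-continuous, the preimage under $\psi$ of \emph{any} open set of $Y$ is an $F_\sigma$ set in $X$; applying this to the open set $h^{-1}(G)$ shows that $\psi^{-1}(h^{-1}(G))$ is $F_\sigma$ in $X$. Hence $(h\circ\psi)^{-1}(G)$ is $F_\sigma$ for every open $G\subseteq\RR$, so $h\circ\psi \in \mathscr{F_\sigma}(X,\RR)$.

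Finally I would invoke Theorem~\ref{P1 thm_4.1}(ii): since $X$ is normal and $h\circ\psi \in \mathscr{F_\sigma}(X,\RR) = B_1(X,\RR)$, we conclude $h\circ\psi \in B_1(X)$, as required. The argument is essentially a one-line preimage identity wrapped between the definition of $F_\sigma$-continuity and the normal-space equality of Theorem~\ref{P1 thm_4.1}(ii); I do not anticipate a genuine obstacle. The only point demanding care is the hypothesis placement: the $F_\sigma$-continuity of $\psi$ is needed to handle arbitrary open sets $h^{-1}(G)$ of $Y$ (not merely basic ones), while normality of $X$ is exactly what licenses the passage from $\mathscr{F_\sigma}(X,\RR)$ back to $B_1(X)$ in the final step. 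Both hypotheses are thus used in an essential way and neither can be dropped.
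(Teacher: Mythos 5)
Your proposal is correct and follows essentially the same route as the paper: the paper performs the identical preimage chase, only phrased in the complementary form (closed sets of $\RR$ pulling back to $G_\delta$ sets of $X$) before invoking the normal-space identification of $B_1(X,\RR)$ with $\mathscr{F_\sigma}(X,\RR)$. Your version is if anything slightly more explicit in citing Theorem~\ref{P1 thm_4.1}(ii) for the final step.
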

\begin{proof}
Let $C$ be a closed set in $\RR$. By continuity of $h$, $h^{-1}(C)$ is closed in $Y$. $F_\sigma$-continuity of $\psi$ implies that $\psi^{-1}(h^{-1}(C))$ is $G_\delta$. Hence, $(h\circ \psi)^{-1}(C)$ is a $G_\delta$-set in $X$. i.e., $h \circ \psi \in B_1(X)$. 
\end{proof}
\begin{theorem}\label{EXT}
If $f : X \rightarrow Y$ is a $F_\sigma$-continuous function from a $T_4$ space $X$ to a compact Hausdorff space $Y$ then there exists a unique continuous function $\widehat{f} : \MMM(B_1(X)) \ra Y$ such that $\widehat{f} \circ \psi = f$.  
\end{theorem}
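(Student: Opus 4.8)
The plan is to adapt the Stone extension theorem for $\beta X$, replacing continuity by $F_\sigma$-continuity and using the identification of points of $\MMM(B_1(X))$ with $Z_B$-ultrafilters. \textbf{Uniqueness} costs nothing: by Theorem~\ref{EMB} the set $\psi(X)$ is dense in $\MMM(B_1(X))$ and $Y$ is Hausdorff, so two continuous maps agreeing with $f$ on $\psi(X)$ must coincide. Hence only \emph{existence} needs work, and I would obtain it in two stages: first extend bounded real-valued Baire one functions to the structure space, then embed $Y$ in a cube and assemble the coordinate extensions.

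The heart of the matter is the following claim, which I would isolate as a lemma: \emph{each $g\in B_1^*(X)$ admits a continuous extension $\widehat g:\MMM(B_1(X))\ra\RR$ with $\widehat g\circ\psi=g$.} To construct $\widehat g$ as a function, fix $\widehat M\in\MMM(B_1(X))$ with associated $Z_B$-ultrafilter $\mathscr U=Z[\widehat M]$ and set $\widehat g(\widehat M)$ to be the point determined by
\[
\{\widehat g(\widehat M)\}=\bigcap_{Z\in\mathscr U} cl_{\RR}\,g(Z).
\]
Since $g$ is bounded this is an intersection of closed subsets of a compact interval, and it is nonempty because $\mathscr U$ is a filter, so finitely many of its members have nonempty intersection and hence overlapping $g$-images. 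That the intersection is a \emph{single} point is where the maximality of $\mathscr U$ and the lattice structure of $B_1(X)$ enter: for $r$ in the intersection and any $\varepsilon>0$, the set $\{x:|g(x)-r|\le\varepsilon\}=Z\big((g-(r-\varepsilon))\wedge 0\big)\cap Z\big((g-(r+\varepsilon))\vee 0\big)$ lies in $Z(B_1(X))$, and if it were not in $\mathscr U$ then maximality would supply a member of $\mathscr U$ disjoint from it, forcing $r\notin cl_{\RR}g(Z)$ for that member — a contradiction. Thus every such set lies in $\mathscr U$, and two distinct candidate points would put disjoint sets into $\mathscr U$, giving $\emptyset\in\mathscr U$. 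The relation $\widehat g\circ\psi=g$ is then immediate, since $Z[\widehat M_x]=\{Z:x\in Z\}$ and $g(x)\in cl_{\RR}g(Z)$ for every $Z\ni x$.

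\textbf{The main obstacle is the continuity of $\widehat g$.} Here I would use that $\widehat g(\widehat M)$ is nothing but the limit of $g$ along the ultrafilter $\mathscr U$, so the assignment $g\mapsto\widehat g$ is a unital algebra homomorphism that does not increase the supremum norm, that is $\|\widehat g\|_\infty\le\|g\|_\infty$. Because a uniform limit of Baire one functions is again Baire one, $B_1^*(X)$ is uniformly closed, so it suffices to prove continuity of $\widehat g$ for $g$ in a uniformly dense subclass; the required openness of sets $\widehat g^{-1}\big((a,\infty)\big)$ I would read off from the description $\widehat g(\widehat M)>a\iff (g-s)\wedge 0\in\widehat M$ for some $s>a$, together with the $F_\sigma$-characterisation of Baire one functions in Theorem~\ref{P1 thm_4.1}. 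This contractivity-plus-density step is the delicate part of the whole argument and is what genuinely uses the Baire one (rather than merely $F_\sigma$-continuous) nature of the transforms.

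With the scalar extension in hand, the conclusion is formal. Since $Y$ is compact Hausdorff it embeds, via a separating family $\{h_i\}_{i\in I}\subseteq C(Y)$ with values in $[0,1]$, as a closed subspace $e(Y)$ of the cube $[0,1]^I$, where $e(y)=(h_i(y))_i$. By Lemma~\ref{Lem} each $g_i:=h_i\circ f$ lies in $B_1^*(X)$, so the lemma above yields continuous $\widehat{g_i}:\MMM(B_1(X))\ra[0,1]$, and these assemble into a continuous map $F:\MMM(B_1(X))\ra[0,1]^I$, $F(\widehat M)=(\widehat{g_i}(\widehat M))_i$, satisfying $F\circ\psi=e\circ f$. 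As $\MMM(B_1(X))$ is compact and $\psi(X)$ is dense (Theorem~\ref{EMB}), $F(\MMM(B_1(X)))\subseteq cl\,F(\psi(X))=cl\,e(f(X))\subseteq e(Y)$, the last inclusion because $e(Y)$ is compact, hence closed, in the cube. Therefore $\widehat f:=e^{-1}\circ F:\MMM(B_1(X))\ra Y$ is continuous and satisfies $\widehat f\circ\psi=f$, completing the construction.
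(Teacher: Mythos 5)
Your route is genuinely different from the paper's and, in outline, viable. The paper works pointwise on maximal ideals: given $\widehat M$, it forms $M_1=\{g\in C(Y): g\circ f\in\widehat M\}$, checks that this is a prime ideal of $C(Y)$, extends it to a maximal ideal, which is fixed ($=M_y$) by compactness of $Y$, sets $\widehat f(\widehat M)=y$, and verifies continuity directly using zero-set neighbourhoods in $Y$; no scalar extension lemma and no cube embedding are needed. You instead first prove a real-valued extension lemma for $B_1^*(X)$ (essentially the content of the paper's later Theorem~\ref{ctype}, obtained here independently via limits along $Z_B$-ultrafilters) and then assemble coordinate extensions through an embedding $e:Y\ra[0,1]^I$. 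Your construction of $\widehat g$, the singleton argument via maximality, the identity $\widehat g\circ\psi=g$, and the final cube argument ($F(\MMM(B_1(X)))\subseteq cl\, e(f(X))\subseteq e(Y)$, then $\widehat f=e^{-1}\circ F$) are all sound. What each buys: the paper's argument is shorter and self-contained; yours yields the stronger intermediate statement that $g\mapsto\widehat g$ is a norm-contractive unital homomorphism $B_1^*(X)\ra C(\MMM(B_1(X)))$, from which Theorem~\ref{ctype} follows at once.

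The one step you must repair is the continuity of $\widehat g$, which you flag as delicate but leave as a sketch that does not work as written. Reducing to a ``uniformly dense subclass'' of $B_1^*(X)$ buys nothing: there is no natural proper subclass for which continuity is any easier, and the contraction-plus-density device presupposes continuity of the approximating transforms, which is exactly what is to be proved. Moreover the criterion you quote, that $\widehat g(\widehat M)>a$ iff $(g-s)\wedge 0\in\widehat M$ for some $s>a$, expresses $\widehat g^{-1}\bigl((a,\infty)\bigr)$ as a union of the basic \emph{closed} sets $\widehat{\mathscr M}_{(g-s)\wedge 0}$, which does not exhibit it as open. The fix is elementary: using maximality of $\mathscr U=Z[\widehat M]$ (if a zero set is not in $\mathscr U$, some member of $\mathscr U$ misses it), one checks that $\widehat g(\widehat M)>a$ iff $\{x\in X : g(x)\le s\}\notin\mathscr U$ for some $s>a$, i.e.\ $\widehat g^{-1}\bigl((a,\infty)\bigr)=\bigcup_{s>a}\bigl(\MMM(B_1(X))\smallsetminus\widehat{\mathscr M}_{(g-s)\vee 0}\bigr)$, a union of basic open sets; the symmetric identity handles $(-\infty,b)$. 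With that substitution your proof is complete.
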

\begin{proof}
Let $\widehat{M} \in \MMM(B_1(X))$. Construct $M_1 = \{g\in C(Y): g \circ f \in \widehat{M} \}$. By Lemma~\ref{Lem}, as $g\in C(Y)$ and $f$ is $F_\sigma$ continuous, $g\circ f \in B_1(X)$. Now it is easy to check that $M_1$ is a prime ideal in $C(Y)$ and therefore can be extended to a unique maximal ideal $M$ in $C(Y)$ (because, $C(Y)$ is a Gelfand ring). As $Y$ is compact, $M = M_y$, for some $y \in Y$. Hence, $g(y) = 0$, for all $g \in M_1$. In other words, $y \in \bigcap\limits_{g \in M_1} Z(g)$. It is also easy to observe that $\bigcap\limits_{g \in M_1} Z(g) = \{y\}$. So, define $\widehat{f} : \MMM(B_1(X)) \ra Y$ by 
$
\widehat{f}(\widehat{M}) = y
$, where $\bigcap\limits_{g \in M_1} Z(g)= \{y\}$.\\
It is also clear that $\widehat{f} \circ \psi = f$. \\\\
\textbf{$\widehat{f}$ is continuous :}  
Let $\widehat{M} \in \MMM(B_1(X))$. To check the continuity of $\widehat{f}$ at $\widehat{M}$. Let $W$ be any neighbourhood of $\widehat{f}\big( \widehat{M}\big)$ in $Y$. Since $Y$ is compact $T_2$ space, it is Tychonoff and hence $W$ contains a zero set neighbourhood of $\widehat{f}\big( \widehat{M}\big)$.\\
 Let $\widehat{f}\big( \widehat{M}\big) \in Y \smallsetminus Z(g_1) \subseteq Z(g_2) \subseteq W$, for some $g_1,g_2 \in C(Y)$.\\
 $\widehat{f}\big( \widehat{M}\big) \notin Z(g_1) \implies g_1 \notin M_1 \implies g_1 \circ f \notin \widehat{M}$. So, $\widehat{M} \in \big( \MMM(B_1(X)) \big) \smallsetminus \widehat{\mathscr M _{g_1 \circ f}}$ is a basic open set of $\MMM(B_1(X))$ containing $\widehat{M}$.\\
 Our claim is $\widehat{f}\bigg(\MMM(B_1(X)) \smallsetminus \widehat{\mathscr M _{g_1 \circ f}} \bigg) \subseteq W$.\\
Let $\widehat{N}\in \MMM(B_1(X)) \smallsetminus \widehat{\mathscr M _{g_1 \circ f}} \implies \widehat{N} \notin \widehat{\mathscr M _{g_1 \circ f}} \implies g_1 \circ f \notin \widehat{N} \implies g_1 \notin N_1$. Also $g_1.g_2=0 \in N_1$. So, $g_2 \in N_1$, (Since $N_1$ is a prime ideal), which implies $g_2 \circ f \in \widehat{N}$. Hence $g_2\big( \widehat{f}(\widehat{N})\big)=0$, i.e., $\widehat{f}\big( \widehat{N}\big) \subseteq Z(g_2) \subseteq W$. So, $\widehat{f}\bigg(\MMM(B_1(X)) \smallsetminus \widehat{\mathscr M _{g_1 \circ f}} \bigg) \subseteq W$.\\
The uniqueness of $\widehat{f}$ follows from the fact that, $\widehat{f}$ is continuous and $\psi(X)$ is dense in $\MMM(B_1(X))$.
 \end{proof}

\begin{theorem}\label{ctype}
For a $T_4$ space $X$, $B_1^*(X)$ is isomorphic to $C(\MMM(B_1(X)))$. In other words, for a $T_4$-space $X$, $B_1^*(X)$ is a C-type ring.
\end{theorem}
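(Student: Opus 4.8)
The plan is to exhibit the explicit isomorphism $\Lambda : B_1^*(X) \to C(\MMM(B_1(X)))$ that sends a bounded Baire one function to the continuous extension furnished by Theorem~\ref{EXT}. First I would check that $\Lambda$ is well defined: given $f \in B_1^*(X)$, normality of $X$ together with Theorem~\ref{P1 thm_4.1}(ii) gives $B_1(X) = \mathscr{F_\sigma}(X,\mathbb{R})$, so $f$ is $F_\sigma$-continuous; since $f$ is bounded it maps into a compact interval $[a,b]$, whence Theorem~\ref{EXT} produces a unique continuous $\widehat{f} : \MMM(B_1(X)) \ra [a,b] \subseteq \RR$ with $\widehat{f}\circ\psi = f$, i.e.\ $\widehat{f}\in C(\MMM(B_1(X)))$. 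That $\Lambda$ is a unital ring homomorphism would then follow from the density of $\psi(X)$ in $\MMM(B_1(X))$ (Theorem~\ref{EMB}): both $\widehat{f+g}$ and $\widehat{f}+\widehat{g}$ are continuous maps agreeing with $f+g$ on the dense set $\psi(X)$, hence are equal, and similarly for products. Injectivity is immediate, since $\widehat{f}=\widehat{g}$ forces $f = \widehat{f}\circ\psi = \widehat{g}\circ\psi = g$.

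The whole weight of the theorem sits in surjectivity. Given $\phi \in C(\MMM(B_1(X)))$, the natural candidate preimage is $f := \phi\circ\psi : X \ra \RR$. It is bounded because $\phi$ is continuous on the compact space $\MMM(B_1(X))$, and once I know $f \in B_1(X)$ it will lie in $B_1^*(X)$; moreover $\phi$ and $\widehat{f}$ will then both be continuous extensions of $f$ agreeing on the dense set $\psi(X)$, forcing $\widehat{f}=\phi$ and finishing the proof. Thus everything reduces to showing that $f = \phi\circ\psi$ is Baire one, equivalently (again by Theorem~\ref{P1 thm_4.1}(ii)) that $f^{-1}(C)$ is a $G_\delta$ set in $X$ for every closed $C\subseteq\RR$.

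The main obstacle is precisely this last point, because $\psi$ is only \emph{weak} $F_\sigma$-continuous: it pulls basic closed sets back to $G_\delta$ sets, but a general closed subset of $\MMM(B_1(X))$ is an arbitrary (possibly uncountable) intersection of basic closed sets $\widehat{\mathscr M_g}$, whose preimage need not be $G_\delta$. I would overcome this with the key lemma that \emph{every zero set of the compact Hausdorff space $\MMM(B_1(X))$ is a countable intersection of basic closed sets}. Writing $\phi^{-1}(C) = Z(\rho\circ\phi)$, where $\rho(t)=\operatorname{dist}(t,C)$, exhibits $\phi^{-1}(C)$ as such a zero set; granting the lemma, $\phi^{-1}(C) = \bigcap_n \widehat{\mathscr M_{f_n}}$, so $f^{-1}(C) = \psi^{-1}(\phi^{-1}(C)) = \bigcap_n Z(f_n)$ is a countable intersection of $G_\delta$ sets, hence $G_\delta$.

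To prove the lemma I would exploit that $\{\widehat{\mathscr M_g} : g\in B_1(X)\}$ is a base for closed sets stable under finite unions and finite intersections, via $\widehat{\mathscr M_g}\cup\widehat{\mathscr M_h}=\widehat{\mathscr M_{gh}}$ and $\widehat{\mathscr M_g}\cap\widehat{\mathscr M_h}=\widehat{\mathscr M_{g^2+h^2}}$. Normalizing $\chi$ so that $0\le\chi\le 1$ with the same zero set, for each $n$ the set $K_n=\{\chi\ge 1/n\}$ is compact and disjoint from $Z(\chi)$; covering $K_n$ by complements of basic closed supersets of $Z(\chi)$ and passing to a finite subcover, I would produce a single $f_n$ with $Z(\chi)\subseteq\widehat{\mathscr M_{f_n}}\subseteq\{\chi<1/n\}$, whence $\bigcap_n\widehat{\mathscr M_{f_n}} = Z(\chi)$. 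Once the lemma is in hand the theorem follows, giving $B_1^*(X)\cong C(\MMM(B_1(X)))$ and hence that $B_1^*(X)$ is a C-type ring; the delicate point throughout is keeping every set manipulation countable so as to stay within the $G_\delta$/$F_\sigma$ world that the weak embedding alone does not guarantee.
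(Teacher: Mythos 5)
Your proposal is correct, and at the top level it follows the same architecture as the paper's proof: send $f\in B_1^*(X)$ to the continuous extension $\widehat{f}$ supplied by Theorem~\ref{EXT}, verify the homomorphism property and injectivity by density of $\psi(X)$, and obtain surjectivity by showing that $\phi\circ\psi$ is a bounded Baire one function whose extension must equal $\phi$. Where you genuinely diverge is in the surjectivity step, and your version is the more careful one. The paper simply asserts that $h\circ\psi\in B_1(X)$ for every $h\in C(\MMM(B_1(X)))$; the only available justification is Lemma~\ref{Lem}, whose hypothesis is that $\psi$ be $F_\sigma$-continuous, while Theorem~\ref{EMB} establishes only \emph{weak} $F_\sigma$-continuity of $\psi$ (indeed the paper's own corollary after Theorem~\ref{EMB} concedes that full $F_\sigma$-continuity needs the extra hypothesis that closed sets of $\MMM(B_1(X))$ are countable intersections of basic closed sets). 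You correctly isolate this as the crux and repair it with the lemma that every \emph{zero set} of the compact Hausdorff space $\MMM(B_1(X))$ is a countable intersection of the basic closed sets $\widehat{\mathscr M_g}$ --- which suffices, since $\phi^{-1}(C)$ is a zero set for closed $C\subseteq\RR$ --- via the standard compactness/finite-subcover argument together with the identities $\widehat{\mathscr M_g}\cap\widehat{\mathscr M_h}=\widehat{\mathscr M_{g^2+h^2}}$ (valid because $f\in\widehat{M}\iff Z(f)\in Z[\widehat{M}]$). This buys a complete argument at the cost of one extra lemma; the paper's route is shorter but leaves exactly this countability issue unaddressed. Your remaining steps (normality of $X$ plus Theorem~\ref{P1 thm_4.1}(ii) to pass from ``$f^{-1}(C)$ is $G_\delta$'' back to ``$f$ is Baire one'', and Hausdorffness of $\MMM(B_1(X))$ to identify $\widehat{f}$ with $\phi$ on the closure of $\psi(X)$) are all sound.
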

\begin{proof}
Let $f \in B_1^*(X)$. Since $X$ is normal and $f$ is Baire one, it is $F_\sigma$-continuous. Using Theorem~\ref{EXT}, $f$ has a continuous extension $\widehat{f} : \MMM (B_1(X)) \rightarrow [r, s]$ where $f(X) \subseteq [r, s]\subseteq \RR$.\\
Define $\eta : B_1^*(X) \rightarrow C(\MMM (B_1(X)))$ by $\eta(f) = \widehat{f}$. \\
We claim that $\eta$ is a ring isomorphism. Clearly, for any $f, g \in B_1^*(X)$ and for any fixed maximal ideal $\widehat{M}_x$ of $\MMM (B_1(X))$,
$$
(\widehat{f + g})(\widehat{M}_x) = \widehat{f}(\widehat{M}_x) + \widehat{g}(\widehat{M}_x)
$$
and
$$
(\widehat{fg})(\widehat{M}_x) = \widehat{f}\widehat{g}(\widehat{M}_x)
$$
Using the notation of Theorem~\ref{EMB}, $\psi(X) = \{\widehat{M}_x \ : \ x\in X\}$ is dense in $\MMM(B_1(X))$ and hence, $\widehat{f + g}= \widehat{f}+ \widehat{g}$ and $\widehat{fg}= \widehat{f}\widehat{g}$. 
So, $\eta (f + g) = \eta(f) + \eta(g)$ and $\eta(fg) = \eta(f) \eta(g)$. i.e., $\eta$ is a ring homomorphism. That $\eta$ is one-one is clear from the definition of the function $\eta$. We finally show that $\eta$ is surjective.\\
For each $h \in C(\MMM(B_1(X)))$, $h\circ \psi \in B_1(X)$ and $(h\circ \psi)(X) \subseteq [a, b]$, for some $a< b$ in $\RR$. i.e., $h\circ \psi \in B_1^*(X)$. Now, $\eta(h\circ \psi) = \widehat{h\circ\psi}$ where $(\widehat{h\circ\psi})(\widehat{M}_x) = (h\circ\psi)(x) = h(\widehat{M}_x)$. Since $\widehat{h\circ\psi} = h$ on $\psi(X)$ and $\psi(X)$ is dense in $\MMM (B_1(X))$, we conclude that $\eta(h\circ \psi) = h$.
\end{proof}
\begin{corollary}
If $X$ is a $T_4$ space then $\MMM(B_1(X)) \cong \MMM(B_1^*(X))$.
\end{corollary}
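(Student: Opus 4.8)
The plan is to deduce the homeomorphism $\MMM(B_1(X)) \cong \MMM(B_1^*(X))$ directly from the isomorphism $\eta : B_1^*(X) \to C(\MMM(B_1(X)))$ established in Theorem~\ref{ctype}. The key observation is that the structure space of a commutative ring with unity is an invariant of its ring structure: any ring isomorphism $A \to B$ induces a bijection between maximal ideals of $A$ and maximal ideals of $B$ (by $M \mapsto \eta(M)$), and this bijection carries the hull-kernel topology of one to that of the other, hence is a homeomorphism of structure spaces. So first I would record this general principle, namely that $\MMM(\cdot)$ is a functor sending ring isomorphisms to homeomorphisms of structure spaces.

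Granting that principle, the argument proceeds in two short steps. First, by Theorem~\ref{ctype}, $B_1^*(X) \cong C(\MMM(B_1(X)))$, so $\MMM(B_1^*(X)) \cong \MMM(C(\MMM(B_1(X))))$. Second, I would invoke the classical fact that for a compact Hausdorff space $K$, the structure space $\MMM(C(K))$ is homeomorphic to $K$ itself, via the map sending $p \in K$ to the fixed maximal ideal $M_p = \{g \in C(K) : g(p) = 0\}$; this is precisely the content of the Gelfand--Kolmogoroff picture recalled in the introduction, and it applies because $\MMM(B_1(X))$ is compact Hausdorff. Taking $K = \MMM(B_1(X))$ gives $\MMM(C(\MMM(B_1(X)))) \cong \MMM(B_1(X))$. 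Chaining the two homeomorphisms yields $\MMM(B_1^*(X)) \cong \MMM(B_1(X))$, as desired.

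I expect the main obstacle to be purely bookkeeping rather than conceptual: one must check that the maximal-ideal correspondence $M \mapsto \eta(M)$ genuinely respects the base $\{\widehat{\mathscr M_f}\}$ of closed sets, i.e. that $\eta$ exchanges basic closed sets of $\MMM(B_1^*(X))$ with those of $\MMM(C(\MMM(B_1(X))))$, since a ring isomorphism sends $\widehat{\mathscr M_f} = \{M : f \in M\}$ to $\{\eta(M) : \eta(f) \in \eta(M)\} = \widehat{\mathscr M_{\eta(f)}}$. This is routine but should be stated explicitly to justify the word ``homeomorphism'' rather than merely ``bijection.'' The only other point requiring care is the appeal to $\MMM(C(K)) \cong K$ for compact Hausdorff $K$; since this is standard (see \cite{GJ}) and $\MMM(B_1(X))$ has already been shown to be compact Hausdorff, I would simply cite it. Altogether the corollary is a formal consequence of Theorem~\ref{ctype} together with two standard invariance facts, so the proof is genuinely short.
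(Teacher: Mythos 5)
Your proposal is correct and follows exactly the paper's argument: invoke Theorem~\ref{ctype} to get $\MMM(B_1^*(X)) \cong \MMM(C(\MMM(B_1(X))))$ via the general principle that isomorphic rings have homeomorphic structure spaces, then use $\MMM(C(K)) \cong K$ for the compact Hausdorff space $K = \MMM(B_1(X))$ and chain the homeomorphisms. The extra bookkeeping you flag (that $\eta$ carries $\widehat{\mathscr M_f}$ to $\widehat{\mathscr M_{\eta(f)}}$) is a reasonable elaboration of what the paper leaves implicit.
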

\begin{proof}
If two rings are isomorphic then their structure spaces are homeomorphic. So, $\MMM(B_1^*(X)) \cong \MMM(C(\MMM(B_1(X))))$. Since $\MMM(B_1(X))$ is compact, 
$$\MMM(C(\MMM(B_1(X)))) \cong \MMM(B_1(X)).$$ Hence $\MMM(B_1(X)) \cong \MMM(B_1^*(X))$.
\end{proof}
So far we have seen that for a normal space $(X, \tau)$, we get a compact Hausdorff space $\MMM(B_1(X))$ such that $(X, \tau)$ is densely weak $F_\sigma$-embedded in $\MMM(B_1(X))$ and every Baire one function on $X$ has a unique continuous extension on it. For which class of spaces we may expect $\MMM(B_1(X))$ to be the $Stone$-$\check{C}ech$ compactification of $(X,\tau)$? In what follows, we could partially resolve this matter.

Let $(X, \tau)$ be a $T_4$ space. Consider the collection $\BBB = \{Z(f) \ : \ f \in B_1(X)\}$. It is clear that $\emptyset = Z(1) \in \BBB$ and $Z(f) \cup Z(g) = Z(fg) \in \BBB$, for any $f, g \in B_1(X)$. So, $\BBB$ forms a base for closed sets for some topology $\sigma$ on $X$. Certainly, $\tau \subseteq \sigma$, as $\{Z(f) \ : \ f \in C(X)\}$ is a base for closed sets for the topology $\tau$. \\\\
If $B_1(X) = C(X)$ then of course $\tau = \sigma$. Does the converse hold? i.e., if $\sigma = \tau$ then does it imply $B_1(X) = C(X)?$ Before answering this question, we observe that $\MMM(B_1(X))$ is indeed the $Stone$-$\check{C}ech$ compactification of $(X, \sigma)$.\\
In the following theorem, to avoid any ambiguity, we use the notation $\MMM (B_1(X, \tau))$ to denote $\MMM(B_1(X))$ and $\MMM(C(X, \sigma))$ to denote the structure space of $C(X)$, where $X$ has $\sigma$ topology on it. We also use the notation $\beta X_\sigma$ to denote the $Stone$-$\check{C}ech$ compactification of $(X, \sigma)$.  
\begin{theorem}\label{SC}
For a $T_4$ space $(X,\tau)$, $\MMM(B_1(X, \tau))$ is the $Stone$-$\check{C}ech$ compactification of $(X, \sigma)$. 
\end{theorem}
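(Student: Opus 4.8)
The plan is to prove that the ring $B_1(X,\tau)$ of Baire one functions on $(X,\tau)$ coincides \emph{exactly} with the ring $C(X,\sigma)$ of continuous functions on $(X,\sigma)$, and then to read off the conclusion from the classical identification of the structure space $\MMM(C(Y))$ with $\beta Y$ for a Tychonoff space $Y$ (the Gelfand--Kolmogorov/Stone description already used in \cite{GJ}). First I would record that $(X,\sigma)$ is itself Tychonoff: it is $T_1$ because $\tau \subseteq \sigma$ and $(X,\tau)$ is $T_1$, and it is completely regular because every $f \in B_1(X)$ is $\sigma$-continuous while $\{Z(f) : f \in B_1(X)\}$ is a base for the $\sigma$-closed sets, so a point lying off a basic $\sigma$-closed set $Z(f)$ is separated from it by $f$ itself.

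The heart of the argument is the identity $B_1(X,\tau) = C(X,\sigma)$. The inclusion $B_1(X,\tau) \subseteq C(X,\sigma)$ is the easy half: for $f \in B_1(X)$ and a closed $C \subseteq \RR$, writing $C = Z(\phi)$ for a continuous $\phi : \RR \ra \RR$ (possible since $\RR$ is perfectly normal), one has $f^{-1}(C) = Z(\phi \circ f)$, and $\phi \circ f \in B_1(X)$ because the composite of a continuous map with a Baire one function is again Baire one; hence $f^{-1}(C)$ is a basic $\sigma$-closed set and $f$ is $\sigma$-continuous. The reverse inclusion $C(X,\sigma) \subseteq B_1(X,\tau)$ is where the real work lies. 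Given $g \in C(X,\sigma)$, I would reduce to bounded $g$ by composing with a homeomorphism $\RR \cong (-1,1)$ (harmless for the ring and lattice structure), and then, since $(X,\tau)$ is normal, invoke Theorem~\ref{P1 thm_4.1}(ii) to replace "Baire one" by "$F_\sigma$-continuous with respect to $\tau$": it suffices to show $g^{-1}(U)$ is $\tau$-$F_\sigma$ for every open $U \subseteq \RR$. I would obtain this by uniformly approximating $g$ by Baire one functions — partitioning the range into small intervals, showing the resulting level sets are simultaneously $\tau$-$F_\sigma$ and $\tau$-$G_\delta$ (so that their characteristic functions are Baire one), and then appealing to the uniform closedness of $B_1(X)$ together with its lattice structure.

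With the lemma established the theorem follows quickly. Since $B_1(X,\tau) = C(X,\sigma)$ as rings, $\MMM(B_1(X,\tau)) = \MMM(C(X,\sigma))$, and the latter is $\beta X_\sigma$ by the structure-space description applied to the Tychonoff space $(X,\sigma)$. It then remains only to check that the embedding realizing this homeomorphism agrees with the map $\psi$ of Theorem~\ref{EMB}: the fixed maximal ideal $\widehat{M}_x$ of $B_1(X)$ is precisely the fixed maximal ideal $M_x$ of $C(X,\sigma)$, so $\psi$ is exactly the canonical dense embedding $(X,\sigma) \hookrightarrow \beta X_\sigma$, now a genuine homeomorphic embedding (rather than merely weak $F_\sigma$) because the pullback $\psi^{-1}(\widehat{\mathscr M_f}) = Z(f)$ is $\sigma$-closed.

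The main obstacle is the inclusion $C(X,\sigma) \subseteq B_1(X,\tau)$. The danger is that an arbitrary $\sigma$-open set is only an \emph{arbitrary} union of $\tau$-cozero sets of Baire one functions, and such unions need not be $\tau$-$F_\sigma$; dually, a $\sigma$-closed set is an arbitrary intersection of Baire one zero sets and need not be $\tau$-$G_\delta$. The resolution I would exploit is that this pathology cannot be witnessed by the level sets of a single real-valued $\sigma$-continuous function: because $\RR$ is second countable and a series $\sum_n 2^{-n} h_n$ of bounded Baire one functions converges uniformly and hence stays in $B_1(X)$, the part of the $\sigma$-structure detected by one such $g$ is already countably generated and therefore lives inside $B_1(X,\tau)$. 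Making this precise — in particular, showing via the normality of $(X,\tau)$ and Theorem~\ref{P1 thm_4.1} that disjoint $\sigma$-zero-sets are already separated by Baire one zero sets — is the crux on which the whole identification rests.
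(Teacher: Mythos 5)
Your overall architecture is genuinely different from the paper's: the paper never claims that $B_1(X,\tau)$ and $C(X,\sigma)$ are the same ring. It verifies the universal property of $\beta X_\sigma$ directly --- first checking that $x \mapsto \widehat{M}_x$ is a dense embedding of $(X,\sigma)$ by showing it exchanges the basic closed sets $Z(f)$ and $\widehat{\mathscr M}_f \cap \psi^*(X)$, and then extending an arbitrary continuous $f \colon (X,\sigma) \to Y$ ($Y$ compact Hausdorff) to $\MMM(B_1(X,\tau))$ by contracting each maximal ideal $\widehat{M}$ to the ideal $M_1 = \{g \in C(Y) : g\circ f \in \widehat{M}\}$ of $C(Y)$, exactly as in Theorem~\ref{EXT}. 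Your first half is fine: the embedding statement and the inclusion $B_1(X,\tau)\subseteq C(X,\sigma)$ (the same computation that appears in the paper's Theorem~\ref{sigma_tau}) are correct.

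The fatal problem is your central lemma: the reverse inclusion $C(X,\sigma) \subseteq B_1(X,\tau)$ is false, and no uniform-approximation argument can rescue it. Take $X = \RR$ with its usual topology $\tau$. By Theorem~\ref{SigmaIsDisc}, $\sigma$ is the discrete topology, so $C(X,\sigma)$ is the ring of \emph{all} real-valued functions on $\RR$; but $\chi_{\QQ} \notin B_1(\RR)$ (a Baire one function on a complete metric space has a point of continuity on every nonempty closed subspace), and indeed $\lvert B_1(\RR)\rvert = \mathfrak c < 2^{\mathfrak c} = \lvert C(X,\sigma)\rvert$, so the rings cannot even be isomorphic. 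The danger you yourself flag --- that a $\sigma$-closed set is an arbitrary intersection of Baire one zero sets and need not be $\tau$-$G_\delta$ --- is real and is already witnessed by the level set $g^{-1}(1)$ of a single $\sigma$-continuous $g$, which can equal $\QQ$; second countability of the range does not help. Even your weaker fallback, that disjoint $\sigma$-zero-sets are completely separated by Baire one functions, fails for $A = \QQ$ and $B = \RR\setminus\QQ$: an $f\in B_1(\RR)$ with $f\le 0$ on $\QQ$ and $f \ge 1$ on $\RR\setminus\QQ$ would make $\{f\le 0\}$ and $\{f\ge 1\}$ disjoint dense $G_\delta$ sets, contradicting the Baire category theorem. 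So this route cannot be repaired, and the result must be approached through the extension property rather than through an identification of rings. (Be aware, though, that the same example puts pressure on the extension step: for the $\sigma$-continuous map $f = \chi_{\QQ} \colon (\RR,\sigma)\to[0,1]$ and $g\in C([0,1])$ with $g(0)\neq g(1)$, the composite $g\circ f$ does not lie in $B_1(\RR,\tau)$, and one can check that the resulting $M_1$ is not prime; so the hypothesis under which Theorem~\ref{EXT} is actually proved, namely $F_\sigma$-continuity of $f$ with respect to $\tau$, is genuinely stronger than $\sigma$-continuity.)
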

\begin{proof}
We first observe that $\psi^* : (X, \sigma) \rightarrow \MMM(B_1(X, \tau))$ given by $x \mapsto \widehat{M}_x$ is an embedding. That $\psi^*$ is a one-one map is already proved in Theorem~\ref{EMB}. It is easy to observe that $\psi^*$ exchanges base for closed sets of $(X, \sigma)$ and $\MMM(B_1(X, \tau))$: For any $f\in B_1(X, \tau)$,
\begin{eqnarray*}
\psi^*(Z(f)) &=& \{\psi^*(x) \ : \ f(x) = 0\}\\
		   &=& \{\widehat{M}_x \ : \ f(x) = 0\}\\
		   &=& \{\widehat{M}_x \ : \ f\in \widehat{M}_x\}\\
		   &=& \widehat{\mathscr M}_f \cap \psi^*(X)
\end{eqnarray*}
Therefore, $(X, \sigma)$ is densely embedded in the compact Hausdorff space $\MMM(B_1(X, \tau))$. We now show that for any continuous function $f : (X, \sigma) \rightarrow Y$  (where $Y$ is any compact Hausdorff space), there exists $\widehat{f} : \MMM(B_1(X, \tau)) \rightarrow Y$ such that $\widehat{f}\circ \psi^* = f$.\\
Let $\widehat{M} \in \MMM(B_1(X))$. Construct $M_1 = \{g\in C(Y) \ : \ g\circ f \in \widehat{M}\}$. It is easy to check that $M_1$ is a prime ideal of $C(Y)$ and hence can be extended to a unique maximal ideal of $C(Y)$. $Y$ being a compact Hausdorff space, every maximal ideal of $C(Y)$ is fixed and hence, $M_1 \subseteq M_y$, for some $y\in Y$. So, for all $g \in M_1$, $g(y) = 0$ which implies that $y \in \bigcap\limits_{g\in M_1} Z(g)$. Clearly, $\bigcap\limits_{g\in M_1} Z(g) = \{y\}$. Define $\widehat{f} : \MMM(B_1(X, \tau)) \rightarrow Y$ by $\widehat{f}(\widehat{M}) = y$, whenever $\bigcap\limits_{g\in M_1} Z(g) = \{y\}$. Proceeding as in Theorem~\ref{EXT}, we observe that $\widehat{f}$ is a unique continuous function satisfying $\widehat{f}\circ \psi^* = f$. Hence, $\MMM(B_1(X, \tau))$ is the $Stone$-$\check{C}ech$ compactification of $(X, \sigma)$.  
\end{proof}
\begin{corollary}\label{COR_SC}
For any $T_4$ space $(X, \tau)$, $\MMM(B_1(X, \tau)) \cong \beta X_\sigma \cong \MMM(C(X, \sigma))$.
\end{corollary}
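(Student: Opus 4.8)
The plan is to read this corollary off from Theorem~\ref{SC} together with the classical Gelfand--Kolmogoroff description of the maximal ideal space of a ring of continuous functions, treating the two claimed homeomorphisms separately. Almost all of the analytic content is already carried by Theorem~\ref{SC}; what remains is essentially organisational.

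For the first homeomorphism $\MMM(B_1(X, \tau)) \cong \beta X_\sigma$, I would argue that Theorem~\ref{SC} already exhibits $\MMM(B_1(X,\tau))$ as \emph{a} Stone--\v{C}ech compactification of $(X,\sigma)$: it is a compact Hausdorff space (being the structure space of the Gelfand ring $B_1(X)$), it contains $\psi^*(X)$ as a dense copy of $(X,\sigma)$, and the extension property established there shows that every continuous map from $(X,\sigma)$ into a compact Hausdorff space factors continuously through it. Since the Stone--\v{C}ech compactification is unique up to a homeomorphism fixing the dense copy of the underlying space, this forces $\MMM(B_1(X,\tau)) \cong \beta X_\sigma$; no further computation is needed beyond matching notation.

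For the second homeomorphism $\beta X_\sigma \cong \MMM(C(X,\sigma))$, I would invoke the standard result that for a Tychonoff space $Z$ the structure space $\MMM(C(Z))$ with its hull--kernel topology is homeomorphic to $\beta Z$ \cite{GJ}. The only hypothesis to verify is that $(X,\sigma)$ is Tychonoff, and this is again supplied by Theorem~\ref{SC}: the map $\psi^*$ embeds $(X,\sigma)$ into the compact Hausdorff (hence Tychonoff) space $\MMM(B_1(X,\tau))$, and every subspace of a Tychonoff space is Tychonoff. Applying the classical theorem with $Z = (X,\sigma)$ then yields $\MMM(C(X,\sigma)) \cong \beta X_\sigma$, and chaining the two homeomorphisms gives $\MMM(B_1(X, \tau)) \cong \beta X_\sigma \cong \MMM(C(X, \sigma))$.

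I do not expect a genuine obstacle here. The one point requiring care is confirming that the universal property proved in Theorem~\ref{SC} is available for \emph{all} compact Hausdorff target spaces $Y$ (not merely for $Y = \RR$ or for bounded real targets), so that it genuinely certifies the Stone--\v{C}ech property rather than some weaker compactification property; granting that, the corollary is immediate.
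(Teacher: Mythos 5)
Your proposal is correct and follows essentially the same route as the paper, whose proof is the one-line observation that the corollary ``follows from the theorem and the fact that the structure space of $C(X,\sigma)$ is $\beta X_\sigma$.'' Your additional checks --- that Theorem~\ref{SC} establishes the universal property for arbitrary compact Hausdorff targets, and that $(X,\sigma)$ is Tychonoff because it embeds in the compact Hausdorff space $\MMM(B_1(X,\tau))$ --- are exactly the details the paper leaves implicit.
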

\begin{proof}
Follows from the theorem and the fact that the structure space of $C(X, \sigma)$ is $\beta X_\sigma$.
\end{proof}
The following gives a complete description of the maximal ideals of ${B_1}^*(X)$ :
\begin{theorem}
For a $T_4$ space $X$, $\{\widehat{M}^{* p} \ : \ p \in \beta X_\sigma\}$ is the complete collection of maximal ideals of ${B_1}^*(X)$, where $\widehat{M}^{* p} = \{f \in B_1^*(X) \ : \ \widehat{f}(p) = 0\}$. Also, $p \neq q$ implies $\widehat{M}^{* p} \neq \widehat{M}^{* q}$. Moreover, $\widehat{M}^{*p}$ is a fixed maximal ideal if and only if $p \in X$. 
\end{theorem}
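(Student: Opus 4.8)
The plan is to transport the whole problem across the isomorphism $\eta : B_1^*(X) \to C(\MMM(B_1(X)))$ supplied by Theorem~\ref{ctype} and then re-index the resulting maximal ideals by the points of $\beta X_\sigma$ via Corollary~\ref{COR_SC}. Since $\eta$ is a ring isomorphism with $\eta(f) = \widehat{f}$, it induces a bijection between the maximal ideals of $B_1^*(X)$ and those of $C(\MMM(B_1(X)))$. The structure space $\MMM(B_1(X))$ is compact Hausdorff, so the classical description applies: every maximal ideal of $C(\MMM(B_1(X)))$ has the form $M_p = \{h : h(p) = 0\}$ for a unique $p \in \MMM(B_1(X))$ (this is exactly the fact, already used after Theorem~\ref{ctype}, that $\MMM(C(\MMM(B_1(X)))) \cong \MMM(B_1(X))$). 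Pulling $M_p$ back along $\eta$ gives $\eta^{-1}(M_p) = \{f \in B_1^*(X) : \widehat{f}(p) = 0\} = \widehat{M}^{*p}$, and identifying $p$ with the corresponding point of $\beta X_\sigma$ through Corollary~\ref{COR_SC} yields the complete list $\{\widehat{M}^{*p} : p \in \beta X_\sigma\}$, proving the first assertion. The distinctness claim is then just injectivity of the indexing: a compact Hausdorff space is completely regular, so $C(\MMM(B_1(X)))$ separates points and $p \neq q$ forces $M_p \neq M_q$; transporting through $\eta$ gives $\widehat{M}^{*p} \neq \widehat{M}^{*q}$.

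The substantive part is the fixed/free dichotomy, which I would argue directly in terms of zero-sets, in analogy with the earlier $B_1(X)$-version. The key relation is $\widehat{f} \circ \psi = f$ from Theorem~\ref{EXT} (so that $\widehat{f}(\widehat{M}_x) = f(x)$ for all $x \in X$). If $p$ corresponds to $\psi(x) = \widehat{M}_x$ for some $x \in X$, then $\widehat{f}(p) = f(x)$, whence $\widehat{M}^{*p} = \{f \in B_1^*(X) : f(x) = 0\}$; every member vanishes at $x$, so $x \in \bigcap_{f \in \widehat{M}^{*p}} Z(f)$ and the ideal is fixed. Conversely, suppose $p \in \beta X_\sigma \setminus X$. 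Fix any $x \in X$; then $p$ and $\psi(x)$ are distinct points of the compact Hausdorff space $\MMM(B_1(X))$, so point-separation gives $h \in C(\MMM(B_1(X)))$ with $h(p) = 0$ and $h(\psi(x)) \neq 0$. Setting $f = \eta^{-1}(h) \in B_1^*(X)$, we get $\widehat{f}(p) = 0$, so $f \in \widehat{M}^{*p}$, while $f(x) = \widehat{f}(\psi(x)) \neq 0$, so $x \notin Z(f)$. As $x \in X$ was arbitrary and each $Z(f)$ is a subset of $X$, this forces $\bigcap_{f \in \widehat{M}^{*p}} Z(f) = \emptyset$, so $\widehat{M}^{*p}$ is free.

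I expect the delicate point to be precisely this converse: the zero-sets $Z(f)$ live in $X$, whereas $p$ lives in the larger space $\beta X_\sigma$, so one must show that \emph{no point of $X$} survives in the intersection rather than merely that $p$ itself is absent. The point-separation step handles this cleanly, because it converts the abstract vanishing $\widehat{f}(p) = 0$ into genuine non-vanishing $f(x) \neq 0$ at each prescribed point of $X$, using only that $\MMM(B_1(X))$ is compact Hausdorff together with $\widehat{f} \circ \psi = f$ and the density of $\psi(X)$. Apart from this, the remaining verifications (that $M_1$-style pullbacks are genuinely maximal, and the routine transport of ideal-theoretic properties through the isomorphism $\eta$) are formal.
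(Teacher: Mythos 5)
Your proposal is correct, and its first two assertions (completeness of the list and injectivity of $p \mapsto \widehat{M}^{*p}$) follow exactly the paper's route: transport through the isomorphism $\eta$ of Theorem~\ref{ctype}, invoke the fixed-maximal-ideal description for $C$ of the compact Hausdorff space $\MMM(B_1(X))$, and re-index by $\beta X_\sigma$ via Corollary~\ref{COR_SC}. Where you genuinely diverge is the ``free'' half of the last assertion. The paper argues formally: if $\widehat{M}^{*q}$ were fixed for $q \in \beta X_\sigma \setminus X$, it would have to coincide with $\widehat{M}^{*}_p = \widehat{M}^{*p}$ for some $p \in X$, and the already-established injectivity forces $q = p \in X$, a contradiction. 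This is shorter but quietly relies on knowing that every fixed maximal ideal of $B_1^*(X)$ equals some $\widehat{M}^{*}_p$ and that $\widehat{M}^{*}_p = \widehat{M}^{*p}$ under the identification of $X$ inside $\beta X_\sigma$. Your argument instead shows freeness directly: for each $x \in X$ you separate $p$ from $\psi(x)$ by some $h \in C(\MMM(B_1(X)))$ and pull back to get $f \in \widehat{M}^{*p}$ with $f(x) \neq 0$, so that $\bigcap_{f \in \widehat{M}^{*p}} Z(f) = \emptyset$. This is more hands-on and self-contained (it does not presuppose the classification of fixed maximal ideals of $B_1^*(X)$), at the cost of an extra point-separation step; your own flagged concern --- that one must exclude \emph{every} point of $X$ from the intersection, not just $p$ --- is exactly right and is handled correctly by quantifying over all $x \in X$. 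Both arguments are valid.
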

\begin{proof}
By Theorem~\ref{ctype}, the map $\eta : f \rightarrow \widehat{f}$ is an isomorphism from $B_1^*(X)$ onto $C(\mathcal{M}(B_1(X)))$. So, there is a one-one correspondence between the maximal ideals of $B_1^*(X)$ and those of $C(\mathcal{M}(B_1(X)))$. $\mathcal{M}(B_1(X))$ being compact, every maximal ideal of the ring $C(\mathcal{M}(B_1(X)))$ is of the form $\{h \in C(\mathcal{M}(B_1(X))) \ : \ h(p) = 0\}$, where $p \in \mathcal{M}(B_1(X)) \cong \beta X_\sigma$. So, the maximal ideals of $B_1^*(X)$ are given by \\
$\eta^{-1}\left( \{h \in C(\mathcal{M}(B_1(X))) \ : \ h(p) = 0\}\right) = \{f \in {B_1}^*(X) \ : \ \widehat{f}(p) = 0\} = \widehat{M}^{*p}$ (say),
for each $p \in \beta X_\sigma$. \\
$p \neq q$ $\Rightarrow$ $\{h \in C(\mathcal{M}(B_1(X))) \ : \ h(p) = 0\} \neq $ $\{h \in C(\mathcal{M}(B_1(X))) \ : \ h(q) = 0\}$ and so, $\eta^{-1}\left(\{h \in C(\mathcal{M}(B_1(X))) \ : \ h(p) = 0\}\right) \neq $ $\eta^{-1}\left(\{h \in C(\mathcal{M}(B_1(X))) \ : \ h(q) = 0\}\right)$. i.e., $\widehat{M}^{* p} \neq \widehat{M}^{* q}$. \\
If $p \in X$ then clearly, $\widehat{M}^{* p} = \{f \in {B_1}^*(X) \ : \ f(p) = 0\} = \widehat{M}^*_p$, the fixed maximal ideal of ${B_1}^*(X)$. \\
If $q \in \beta X_\sigma \setminus X$, then we claim that $\widehat{M}^{* q}$ is not fixed. If possible, it is a fixed maximal ideal of ${B_1}^*(X)$. Then $\widehat{M}^{* q} = {\widehat{M}^ *}_p$ for some $p \in X$. But in that case, $\widehat{M}^{* q} = \widehat{M}^{* p}$ and consequently, $p = q$.
\end{proof}
It is well known by Gelfand Kolmogoroff Theorem that $\{M^p \ : \ p\in \beta X_\sigma\}$ is precisely the collection of all maximal ideals of $C(X, \sigma)$. So, $\{\widehat{M}^p \ : \ p\in \beta X_\sigma\}$ is the exact collection of all maximal ideals of $B_1(X)$, where $X$ has $\tau$ topology on it. Moreover, for each $p\in X$, $\widehat{M}^p = \widehat{M}_p$, a fixed maximal ideal of $B_1(X)$. So, under the isomorphism of Corollary~\ref{COR_SC}, $\widehat{M}_p$ of $\MMM(B_1(X, \tau))$ corresponds to $M_p$ of $\MMM(C(X, \sigma))$. As a result, we get the following:
\begin{theorem}\label{COMP_FIX}
$(X, \sigma)$ is compact if and only if each maximal ideal of $B_1(X, \tau)$ is fixed.
\end{theorem}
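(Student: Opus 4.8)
The plan is to translate the condition ``every maximal ideal of $B_1(X,\tau)$ is fixed'' into a statement about the points of $\beta X_\sigma$, and then invoke the standard fact that a Tychonoff space coincides with its $Stone$-$\check{C}ech$ compactification exactly when it is compact. First I would recall from the discussion preceding the theorem that the full list of maximal ideals of $B_1(X,\tau)$ is $\{\widehat{M}^p : p \in \beta X_\sigma\}$, the indexing being bijective because $\widehat{M}^p$ corresponds to $M^p$ under the isomorphism of Corollary~\ref{COR_SC} and the $M^p$ are pairwise distinct by the Gelfand Kolmogoroff theorem. Next I would pin down which of these are fixed: for $p \in X$ we already have $\widehat{M}^p = \widehat{M}_p$, a fixed maximal ideal; and for $q \in \beta X_\sigma \smallsetminus X$ the ideal $\widehat{M}^q$ cannot be fixed, since a fixed maximal ideal is of the form $\widehat{M}_p = \widehat{M}^p$ for some $p \in X$, and injectivity of $p \mapsto \widehat{M}^p$ would force $q = p \in X$, a contradiction (this is precisely the argument already used for $\widehat{M}^{*q}$ in the description of $\MMM(B_1^*(X))$). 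Hence $\widehat{M}^p$ is fixed if and only if $p \in X$.

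With this in hand, ``every maximal ideal of $B_1(X,\tau)$ is fixed'' is equivalent to $\beta X_\sigma \smallsetminus X = \emptyset$, i.e. to the set equality $X = \beta X_\sigma$. I would then finish by showing that $X = \beta X_\sigma$ is equivalent to the compactness of $(X,\sigma)$. One direction is immediate: by Theorem~\ref{SC}, $(X,\sigma)$ is densely embedded in $\beta X_\sigma$, so if $X = \beta X_\sigma$ then $(X,\sigma)$ is homeomorphic to the compact space $\beta X_\sigma$ and is therefore compact. For the converse, if $(X,\sigma)$ is compact, then its homeomorphic image in the Hausdorff space $\beta X_\sigma$ is a compact, hence closed, subset; being also dense, it must be all of $\beta X_\sigma$, giving $X = \beta X_\sigma$.

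The argument is largely bookkeeping layered on top of Theorem~\ref{SC} and Corollary~\ref{COR_SC}; the one point that needs genuine care is the step ``$\widehat{M}^q$ fixed $\Rightarrow q \in X$'' for $q \in \beta X_\sigma \smallsetminus X$, where one must be certain that the intrinsic $\widetilde{X}$-indexing of the maximal ideals and the $\beta X_\sigma$-indexing inherited from $C(X,\sigma)$ agree on which ideals are fixed, so that the collection of fixed maximal ideals is exactly $\{\widehat{M}_p : p \in X\}$ in both parametrizations. Once that identification is secured, everything else reduces to the elementary topology of dense embeddings into compact Hausdorff spaces (alternatively, one may bypass $\beta X_\sigma$ entirely and appeal directly to the classical characterization that $(X,\sigma)$ is compact iff every maximal ideal of $C(X,\sigma)$ is fixed, transported through Corollary~\ref{COR_SC}), so I do not anticipate any serious obstacle beyond this bookkeeping.
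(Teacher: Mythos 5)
Your proof is correct and follows essentially the same route as the paper, which simply cites Corollary~\ref{COR_SC} after the preceding paragraph identifies $\{\widehat{M}^p : p \in \beta X_\sigma\}$ as the maximal ideals of $B_1(X,\tau)$ and matches the fixed ones with the points of $X$; your parenthetical alternative (transporting the classical ``compact iff every maximal ideal of $C(X,\sigma)$ is fixed'' through the isomorphism) is precisely the paper's argument. You merely spell out the bookkeeping that the paper leaves implicit.
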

\begin{proof}
Follows from Corollary~\ref{COR_SC}.
\end{proof}
\noindent It is easy to check that in $B_1(X)$, every maximal ideal is fixed if and only if every ideal is fixed.  \\\\
Also, it is evident from the last theorem that for a $T_4$ space $(X, \tau)$, if each maximal ideal of $B_1(X, \tau)$ is fixed then $(X, \sigma)$ is compact and therefore $(X, \tau)$ is also compact. But $(X, \tau)$ being Hausdorff it is maximal compact and hence, $\tau = \sigma$. On the other hand, if $(X, \tau)$ is compact then it is maximal compact (since it is Hausdorff) and therefore, $\sigma$ is never compact if $\tau \neq \sigma$. As a consequence of this we get the following:
\begin{theorem}
If $(X, \tau)$ is a compact Hausdorff space and $\sigma$ is strictly finer than $\tau$ then there exists at least one free (maximal) ideal in $B_1(X, \tau)$. 
\end{theorem}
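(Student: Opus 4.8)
The plan is to argue by contradiction, translating the assertion about free ideals into a statement about the compactness of $(X, \sigma)$ by means of Theorem~\ref{COMP_FIX}, and then to extract a contradiction from the maximality of compact Hausdorff topologies. First I would suppose, to the contrary, that $B_1(X, \tau)$ contains no free maximal ideal; equivalently, every maximal ideal of $B_1(X, \tau)$ is fixed. By Theorem~\ref{COMP_FIX} this is exactly the statement that $(X, \sigma)$ is compact.

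Next I would bring in the hypotheses $\tau \subseteq \sigma$ and that $(X, \tau)$ is compact Hausdorff. Consider the identity map $\mathrm{id} : (X, \sigma) \ra (X, \tau)$. Since every $\tau$-open set is $\sigma$-open (because $\sigma$ is finer), this map is a continuous bijection. Having just assumed $(X, \sigma)$ compact and knowing $(X, \tau)$ is Hausdorff, a continuous bijection from a compact space onto a Hausdorff space is a homeomorphism: any $\sigma$-closed set is $\sigma$-compact, its image under $\mathrm{id}$ is $\tau$-compact, hence $\tau$-closed, so $\mathrm{id}$ is a closed map. Therefore $\sigma = \tau$.

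This equality contradicts the hypothesis that $\sigma$ is strictly finer than $\tau$. Hence the assumption that all maximal ideals are fixed is untenable, and $B_1(X, \tau)$ must possess at least one free maximal ideal. The only ingredient beyond Theorem~\ref{COMP_FIX} is the classical fact that a compact Hausdorff topology admits no strictly finer compact topology; I do not anticipate a genuine obstacle here, since once Theorem~\ref{COMP_FIX} supplies the compactness of $(X, \sigma)$ the remainder is a short contradiction argument. The one point to state carefully is the direction of the identity map and why continuity of $\mathrm{id}: (X, \sigma) \ra (X, \tau)$ (rather than the reverse) is what the inclusion $\tau \subseteq \sigma$ gives.
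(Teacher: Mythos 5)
Your proof is correct and follows essentially the same route as the paper: both assume all maximal ideals are fixed, invoke Theorem~\ref{COMP_FIX} to get compactness of $(X,\sigma)$, and then use the fact that a compact Hausdorff topology is maximal compact (which you prove via the identity map $(X,\sigma)\ra(X,\tau)$ being a continuous bijection from a compact space onto a Hausdorff space) to force $\sigma=\tau$, contradicting strictness. The paper merely cites this maximality fact without the identity-map argument, so your write-up is just a more explicit version of the same proof.
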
 
\begin{proof}
Follows from Theorem~\ref{COMP_FIX} and the fact that $(X, \tau)$ becomes maximal Hausdorff.
\end{proof}
It is now a relevant query, for which spaces $\sigma$ remains strictly finer than $\tau$. We claim that for a $T_4$ space $(X, \tau)$, $\sigma = \tau$ if and only if $B_1(X) = C(X)$ and we establish our claim in what follows next. 
\begin{theorem}\label{sigma_tau}
Let $(X, \tau)$ be a $T_4$ space and $\{Z(f) \ : \ f \in B_1(X, \tau)\}$ be a base for closed sets for the topology $\sigma$ on $X$. Then $\sigma = \tau$ if and only if $B_1(X,\tau) = C(X,\tau)$. 
\end{theorem}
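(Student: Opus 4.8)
The plan is to treat the two implications separately, with essentially all the work concentrated in the forward direction $\sigma = \tau \Rightarrow B_1(X,\tau) = C(X,\tau)$. The reverse implication is immediate and was already noted in the discussion preceding the theorem: if $B_1(X,\tau) = C(X,\tau)$, then the defining family $\{Z(f) : f \in B_1(X,\tau)\}$ of basic closed sets for $\sigma$ coincides verbatim with the family $\{Z(f) : f \in C(X,\tau)\}$ of basic closed sets for $\tau$, and hence $\sigma = \tau$.

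For the forward direction the key observation I would isolate first is that \emph{every} Baire one function on $(X,\tau)$ is continuous as a map $(X,\sigma) \to \RR$; that is, $B_1(X,\tau) \subseteq C(X,\sigma)$, irrespective of whether $\sigma = \tau$. To prove this I would exploit the lattice-ordered ring structure of $B_1(X)$ established in the introduction. Fix $f \in B_1(X,\tau)$ and $a \in \RR$. Since the constant $a$ lies in $C(X) \subseteq B_1(X)$ we have $f - a \in B_1(X)$, and because $B_1(X)$ is closed under $\vee$ and $\wedge$, both $(f-a)\vee 0$ and $(f-a)\wedge 0$ belong to $B_1(X)$. A direct check of zero sets yields $Z((f-a)\vee 0) = \{x : f(x) \le a\} = f^{-1}((-\infty,a])$ and $Z((f-a)\wedge 0) = \{x : f(x) \ge a\} = f^{-1}([a,\infty))$. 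Both are basic closed sets of $\sigma$, so their complements $f^{-1}((a,\infty))$ and $f^{-1}((-\infty,a))$ are $\sigma$-open. As the collection $\{(a,\infty),\,(-\infty,a) : a \in \RR\}$ is a subbase for the usual topology of $\RR$, it follows that $f$ is $\sigma$-continuous.

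With this inclusion in hand the theorem follows quickly. Under the hypothesis $\sigma = \tau$ we have $C(X,\sigma) = C(X,\tau)$, so the inclusion above reads $B_1(X,\tau) \subseteq C(X,\tau)$. The reverse inclusion $C(X,\tau) \subseteq B_1(X,\tau)$ is automatic, since every continuous function is trivially of Baire class one. Combining the two gives $B_1(X,\tau) = C(X,\tau)$, as required.

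I expect the only genuine content—and hence the main obstacle—to be the lattice-theoretic step that realizes the sublevel and superlevel sets of $f$ as zero sets of Baire one functions; once the identities $f^{-1}((-\infty,a]) = Z((f-a)\vee 0)$ and $f^{-1}([a,\infty)) = Z((f-a)\wedge 0)$ are recorded, the remainder is bookkeeping about subbases and inclusions. It is worth remarking that normality is not actually invoked in this argument: the property that $\{Z(f) : f \in B_1(X)\}$ is a base for closed sets holds for any $X$, so the equivalence persists beyond the $T_4$ setting in which it is stated, the $T_4$ hypothesis having been needed earlier for the embedding and extension results rather than here.
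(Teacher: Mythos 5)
Your proof is correct and follows essentially the same route as the paper's: both establish the inclusion $B_1(X,\tau) \subseteq C(X,\sigma)$ by realizing the level sets $\{x : f(x)\le a\}$ and $\{x : f(x)\ge a\}$ as zero sets of Baire one functions and then invoking the hypothesis $\sigma = \tau$ to close the chain $C(X,\tau)\subseteq B_1(X,\tau)\subseteq C(X,\sigma)=C(X,\tau)$. The only differences are cosmetic --- you exhibit the explicit witnesses $(f-a)\vee 0$ and $(f-a)\wedge 0$ where the paper merely asserts the existence of suitable $g_1,g_2 \in B_1(X,\tau)$, and you argue via a subbase of $\RR$ rather than the base of intervals $(a,b)$ --- and your side remark that normality is not actually used in this particular argument is accurate.
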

\begin{proof}
If $B_1(X,\tau) = C(X,\tau)$ then certainly $\sigma = \tau$. For the converse, let $\sigma = \tau$. Consider any $f \in B_1(X,\tau)$ and any basic open set $(a, b)$ of $\RR$. Then $f^{-1}(a, b) = \{x\in X \ : \ a < f(x) < b\} = X \setminus \left(\{x\in X \ : f(x) \leq a\} \cup \{x\in X \ : \ f(x) \geq b\}\right)$ $= (X \setminus Z(g_1)) \cap (X \setminus Z(g_2)) = U$ (say), where $g_1, g_2 \in B_1(X,\tau)$. Since $Z(g_1)$ and $Z(g_2)$ are closed in $(X, \sigma)$, $U$ is open in $(X, \sigma)$. Hence, $f \in C(X, \sigma)$. By hypothesis, $\sigma = \tau$ and therefore, $C(X, \tau) \subseteq B_1(X, \tau) \subseteq C(X, \sigma) = C(X, \tau)$. i.e., $B_1(X, \tau) = C(X, \tau)$. 
\end{proof}
Consequently, from Theorem~\ref{COMP_FIX} and Theorem~\ref{sigma_tau}, it follows that for a $T_4$ space $(X, \tau)$, if a non-continuous Baire one function exists then $B_1(X)$ has a free (maximal) ideal.\\
In general, $B_1(X) = C(X)$ does not always imply that $X$ is discrete. For example, if $X$ is a P-space then $B_1(X) = C(X)$ ~\cite{MW}. We shall now show that for a particular class of topological spaces, e.g., for perfectly normal $T_1$ spaces, $B_1(X) = C(X)$ is equivalent to the discreteness of the space. 
\begin{theorem}\label{SigmaIsDisc}
If $(X, \tau)$ is a perfectly normal $T_1$-space then $\sigma$ is the discrete topology on $X$. 
\end{theorem}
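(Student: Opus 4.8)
The plan is to show that every singleton is open in $(X,\sigma)$, which immediately forces $\sigma$ to be discrete. Since $\BBB = \{Z(f) : f \in B_1(X)\}$ is a base for closed sets of $\sigma$, to prove that $\{x\}$ is $\sigma$-open it suffices to exhibit its complement $X \setminus \{x\}$ as a $\sigma$-closed set, and the cleanest way to do this is to realise $X\setminus\{x\}$ as the zero set $Z(f)$ of a single Baire one function $f$. The natural candidate is the indicator function $\chi_{\{x\}}$, since $Z(\chi_{\{x\}}) = X \setminus \{x\}$. Thus the whole theorem reduces to one claim: in a perfectly normal $T_1$ space, $\chi_{\{x\}} \in B_1(X)$ for each $x \in X$.

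To verify this claim I would invoke Theorem~\ref{P1 thm_4.1}(ii): a perfectly normal space is in particular normal, so $B_1(X,\RR) = \mathscr{F_\sigma}(X,\RR)$, and it is enough to check that $\chi_{\{x\}}$ pulls back every open subset of $\RR$ to an $F_\sigma$ set. There are only four cases for an open $G \subseteq \RR$, according to whether $G$ contains $0$, contains $1$, contains both, or contains neither; the corresponding preimages are $X\setminus\{x\}$, $\{x\}$, $X$, and $\emptyset$. The sets $X$ and $\emptyset$ are trivially $F_\sigma$. Here both hypotheses enter exactly once: $T_1$-ness makes $\{x\}$ closed, hence $F_\sigma$, and perfect normality makes the open set $X\setminus\{x\}$ an $F_\sigma$ set. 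This disposes of all four cases, so $\chi_{\{x\}} \in \mathscr{F_\sigma}(X,\RR) = B_1(X)$.

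With the claim in hand the conclusion is immediate: $X\setminus\{x\} = Z(\chi_{\{x\}}) \in \BBB$ is a basic $\sigma$-closed set, so $\{x\}$ is $\sigma$-open; as $x$ was arbitrary, every singleton is $\sigma$-open and $\sigma$ is discrete. I expect no genuine obstacle beyond correctly isolating that perfect normality is precisely what upgrades the open set $X\setminus\{x\}$ to an $F_\sigma$ set --- this is the single place where more than $T_1$-normality is needed, and it is exactly what makes the indicator function Baire one. If one prefers to avoid Theorem~\ref{P1 thm_4.1}, the same $\chi_{\{x\}}$ can be produced by hand as a pointwise limit of continuous functions, writing $X\setminus\{x\}$ as an increasing countable union of closed sets and applying Urysohn's lemma on each; but the $\mathscr{F_\sigma}$ characterisation keeps the verification shortest.
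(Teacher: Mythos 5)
Your proof is correct and takes essentially the same approach as the paper: both reduce the theorem to showing that the indicator function $\chi_{\{x\}}$ is Baire one, so that $X\setminus\{x\} = Z(\chi_{\{x\}})$ is a basic $\sigma$-closed set and $\{x\}$ is $\sigma$-open. The paper dismisses the Baire-one-ness of $\chi_{\{x\}}$ as ``easy to check'' after noting that perfect normality gives $\{x\}=Z(g)$ for some $g\in C(X)$, whereas you verify the same step via the $\mathscr{F_\sigma}$ characterisation of Theorem~\ref{P1 thm_4.1}(ii); this is a legitimate and arguably more complete justification of the one point the paper glosses over.
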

\begin{proof}
Let $\{y\}$ be any singleton set in $(X, \sigma)$. Since $T_1$-ness is an expansive property and $(X, \tau)$ is $T_1$, it follows that $(X, \sigma)$ is also $T_1$. So, $\{y\}$ is closed in $(X, \sigma)$. Since $(X, \tau)$ is perfectly normal, $\{y\} = Z(g)$, for some $g \in C(X, \tau)$. Define $\chi_y : X \ra \RR$ as follows :
$$
\chi_y(x) = \begin{cases}
			1 & \ x=y\\
			0 & otherwise.
			\end{cases}
$$
It is easy to check that $\chi_y \in B_1(X,\tau)$. Also $Z(\chi_y) = X \setminus \{y\}$ which is an open set in $(X, \tau)$ and hence open in $(X, \sigma)$. By definition of $(X, \sigma)$, $Z(\chi_y)$ is a closed set in $(X, \sigma)$. Hence, $\{y\}$ is both open and closed in $(X, \sigma)$ which shows by arbitrariness of $\{y\}$, $\sigma$ is the discrete topology on $X$.
\end{proof}
 
\begin{theorem}\label{NotSC}
For a perfectly normal $T_1$ space $(X, \tau)$ the following statements are equivalent:
\begin{enumerate}
\item $(X, \tau)$ is discrete.
\item $B_1(X) = C(X)$.
\item ${B_1}^*(X) = C^*(X)$.
\item $\MMM({B_1}^*(X))$ is the $Stone$-$\check{C}ech$ compactification of $(X, \tau)$.
\item $\MMM(B_1(X))$ is the $Stone$-$\check{C}ech$ compactification of $(X, \tau)$.
\end{enumerate}
\end{theorem}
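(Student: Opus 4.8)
The plan is to run a single cycle of implications $(1)\Rightarrow(2)\Rightarrow(3)\Rightarrow(4)\Rightarrow(5)\Rightarrow(1)$, leaning on the machinery already developed for $T_4$ spaces. Note first that a perfectly normal $T_1$ space is $T_4$, so Theorems~\ref{ctype}, \ref{SC}, \ref{sigma_tau}, \ref{SigmaIsDisc} and Corollary~\ref{COR_SC} all apply; in particular, by Theorem~\ref{SigmaIsDisc}, $\sigma$ is the discrete topology on $X$, a fact I will use to close the cycle. The implication $(1)\Rightarrow(2)$ is immediate, since on a discrete space every real-valued function is continuous, so $C(X)\subseteq B_1(X)\subseteq\RR^X=C(X)$; and $(2)\Rightarrow(3)$ is obtained simply by intersecting both sides with the bounded functions.

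For $(3)\Rightarrow(4)$ I would observe that if $B_1^*(X)=C^*(X)$ as rings, then $\MMM(B_1^*(X))=\MMM(C^*(X))$, and the structure space of $C^*(X)$ for the Tychonoff space $(X,\tau)$ is exactly $\beta X_\tau$; since the fixed maximal ideals $\{f : f(x)=0\}$ coincide for the two rings, the canonical embedding is the usual one, yielding $(4)$. Then $(4)\Rightarrow(5)$ invokes the homeomorphism $\MMM(B_1^*(X))\cong\MMM(B_1(X))$ (the corollary to Theorem~\ref{ctype}). The point requiring verification is that this homeomorphism carries $\widehat{M}^*_x$ to $\widehat{M}_x$: tracing through $\eta$ of Theorem~\ref{ctype}, one has $f\in\widehat{M}^*_x \iff \widehat{f}(\widehat{M}_x)=0$, so $\widehat{M}^*_x$ corresponds to evaluation at the point $\widehat{M}_x$. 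Hence the homeomorphism respects the two copies of $X$, and a compactification of $(X,\tau)$ is carried to a compactification of $(X,\tau)$.

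The crux is $(5)\Rightarrow(1)$. Here the decisive observation, already implicit in Theorems~\ref{EMB} and \ref{SC}, is that the subspace topology that $\MMM(B_1(X))$ induces on the image $\psi(X)=\{\widehat{M}_x : x\in X\}$ is $\sigma$, and not $\tau$ in general: for every $f\in B_1(X)$ one has $\psi^*(Z(f))=\widehat{\mathscr M}_f\cap\psi(X)$, so the basic closed sets of $\MMM(B_1(X))$ pull back to precisely the generating $\sigma$-closed sets $Z(f)$. Consequently, if $\MMM(B_1(X))$ is the Stone–\v Cech compactification of $(X,\tau)$, then the canonical map $\psi$ must be a topological embedding of $(X,\tau)$, forcing the induced topology $\sigma$ to coincide with $\tau$. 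Once $\sigma=\tau$ is in hand, Theorem~\ref{sigma_tau} gives $B_1(X)=C(X)$ and Theorem~\ref{SigmaIsDisc} gives that $\sigma=\tau$ is discrete, which is exactly $(1)$.

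I expect the main obstacle to be this last implication, and it is conceptual rather than computational: one must argue carefully that ``being the Stone–\v Cech compactification of $(X,\tau)$'' is to be read through the canonical point map $x\mapsto\widehat{M}_x$, after which the entire equivalence reduces to the single fact that this map is a topological embedding of $(X,\tau)$ exactly when $\tau=\sigma$. A secondary technical point, needed in $(3)\Rightarrow(4)$ and $(4)\Rightarrow(5)$, is to confirm that the isomorphisms of Theorem~\ref{ctype} and its corollary preserve the fixed maximal ideals sitting at points of $X$, so that the structure spaces involved agree as compactifications of $(X,\tau)$ and not merely as abstract compact Hausdorff spaces.
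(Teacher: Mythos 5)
Your proposal is correct and follows essentially the same route as the paper: the identical cycle $(1)\Rightarrow(2)\Rightarrow(3)\Rightarrow(4)\Rightarrow(5)\Rightarrow(1)$, with the closing implication resting on the observation that $\MMM(B_1(X))$ induces the topology $\sigma$ on the canonical copy of $X$ (via Theorem~\ref{SC}), so that $(5)$ forces $\sigma=\tau$ and Theorem~\ref{SigmaIsDisc} then yields discreteness. Your added care in checking that the isomorphisms of Theorem~\ref{ctype} and its corollary preserve the fixed maximal ideals is a worthwhile refinement of a point the paper leaves implicit, but it does not change the argument.
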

\begin{proof}
$(1) \Rightarrow (2)$ : Immediate.\\\\
$(2) \Rightarrow (3)$ : $C^*(X) = {B_1}^*(X) \cap C(X) =  {B_1}^*(X) \cap B_1(X) = {B_1}^*(X)$. \\\\
$(3) \Rightarrow (4)$ : ${B_1}^*(X) = C^*(X)$ implies that $\MMM({B_1}^*(X)) = \MMM(C^*(X))$ and it is well known that $\MMM(C^*(X))$ is the $Stone$-$\check{C}ech$ compactification of $(X, \tau)$. \\\\
$(4) \Rightarrow (5)$ : Follows from the fact that $\MMM({B_1}^*(X)) \cong \MMM(B_1(X))$.\\\\
$(5) \Rightarrow (1)$ : By Theorem~\ref{SC}, $\MMM(B_1(X, \tau))$ is the $Stone$-$\check{C}ech$ compactification of $(X, \sigma)$. By (3),  $\MMM(B_1(X, \tau))$ is the $Stone$-$\check{C}ech$ compactification of $(X, \tau)$. Then the identity map $I : \MMM(B_1(X, \tau)) \ra \MMM(B_1(X, \tau))$ when restricted on $(X, \tau)$ becomes a homeomorphism between $(X, \tau)$ and $(X, \sigma)$. Hence, $\sigma = \tau$. Using Theorem~\ref{SigmaIsDisc}, $(X, \tau)$ is a discrete space.   
\end{proof}
\begin{remark}
There are plenty of non-discrete perfectly normal $T_1$ spaces (for example, any non-discrete metric space; in particular, $\RR$). By Theorem~\ref{NotSC}, we may certainly assert that at least for those spaces $\MMM(B_1(X))$ is never its $Stone$-$\check{C}ech$ compactification. Theorem~\ref{NotSC} also assures the existence of non-continuous Baire one functions in any non-discrete perfectly normal $T_1$ space.  
\end{remark}

\end{document}